\DeclareMathOperator{\ex}{ex} 
\DeclareMathOperator{\bal}{bal} 
\newtheorem{theorem}{Theorem}[section]
\newtheorem{lemma}[theorem]{Lemma}
\newtheorem{definition}[theorem]{Definition}
\newtheorem{corollary}[theorem]{Corollary}
\newtheorem{example}[theorem]{Example}
\title{The evolution of unavoidable bichromatic patterns and extremal cases of balanceability}
\author{
 }
\date{}
\begin{document}
	
	\maketitle
	
	\begin{center}

\begin{multicols}{2}

Yair Caro\\[1ex]
{\small Dept. of Mathematics\\
University of Haifa-Oranim\\
Tivon 36006, Israel\\
yacaro@kvgeva.org.il}

\columnbreak

Adriana Hansberg\\[1ex]
{\small Instituto de Matem\'aticas\\
UNAM Juriquilla\\
Quer\'etaro, Mexico\\
ahansberg@im.unam.mx}\\[2ex]

\end{multicols}

Amanda Montejano\\[1ex]
{\small UMDI, Facultad de Ciencias\\
UNAM Juriquilla\\
Quer\'etaro, Mexico\\
amandamontejano@ciencias.unam.mx}\\[4ex]

\end{center}

\begin{abstract}
We study the color patterns that, for $n$ sufficiently large, are unavoidable in $2$-colorings of the edges of a complete graph $K_n$ with respect to $\min \{e(R), e(B)\}$, where $e(R)$ and $e(B)$ are the numbers of red and, respectively, blue edges. More precisely, we completely characterize which patterns are unavoidable depending on the order of magnitude of $\min \{e(R), e(B)\}$ (in terms of $n$), and show how these patterns evolve from the case without restriction in the coloring, namely that $\min \{e(R), e(B)\} \ge 0$ (given by Ramsey's theorem), to the highest possible restriction, namely that $|e(R) - e(B)| \le 1$. We also investigate the effect of forbidding certain sub-structures in each color. In particular, we show that, if the graphs induced by each of the colors are both free from an induced matching of certain size, the patterns that were unavoidable with $\Omega(n^{2-\varepsilon})$ edges in each color are already granted when $\min \{e(R), e(B)\}$ is linear. Moreover, we analyze the consequences of these results to the balancing number $\bal(n,G)$ of a graph $G$ (i.e. the minimum $k$ such that every $2$-edge coloring of $K_n$ with $\min \{e(R), e(B)\} > k$ contains a copy of $G$ with half the edges in each color), and show that, for every $\varepsilon > 0$, there are graphs $G$ with $\bal(n,G) = \Omega(n^{2-\varepsilon})$, which is the highest order of magnitude that is possible to achieve, as well as graphs where $\bal(n,G) \le c(G)$, where $c(G)$ is a constant that depends only $G$. We characterize the latter ones. We also extend that investigation of the effect of forbidding induced monochromatic matchings of given size to the multicolor setting. 
\end{abstract}
	
\section{Introduction}

\subsection{State of the art}
Ramsey's theorem \cite{Ram} (see also \cite{GrSp}) states that, for any positive integer $t$,  there is an integer $R(t)$ such that, if $n \ge R(t)$, then any $2$‐edge coloring of $K_n$ contains a monochromatic copy of $K_t$. A classical result of Erd\H{o}s and Szekeres \cite{ErSz} implies that $R(k) < 2^{2t}$ for every positive integer $t$. For a recent breakthrough see \cite{CGMS23}. On the other hand,  Turán's theorem \cite{Tur} (see also \cite{AiZie}) guarantees, for a given integer $n > t$, the existence of a subgraph $K_t$ contained in any graph $G$ of order $n$ that has at least $(1 - \frac{1}{t-1}) \frac{n^{2}}{2}$ edges. To force the existence of non-monochromatic color patterns, one needs, as a natural minimum requirement, to ensure both, a large enough $n$ as well as a minimum amount of edges in each color class.  In some settings, the effect of forbidding certain subgraphs in the coloring or graph in question has also been studied \cite{AlSh, Ill, LTTZ}. In this  line, the famous Erd\H{o}s-Hajnal conjecture \cite{ErHa} (see also \cite{Chud}) states that graphs defined by forbidden induced subgraphs have either large cliques or large independent sets. More precisely, it says that, given a graph $H$, there is a constant $\delta = \delta(H)$ such that every graph $G$ of order $n$ that is free of an induced $H$ contains a clique or an independent set of size $\Omega(n^{\delta})$. To compare it to Ramsey's theorem, this would mean, in terms of $2$-edge colorings of the complete graph $K_n$, that the existence of a monochromatic $K_t$ is forced already for $n = \Omega(t^{\frac{1}{\delta}})$ if the edges in one of the colors induces a graph free of an induced subgraph $H$.

Considering $2$-color patterns, the following is a natural question to ask:  which color patterns are unavoidable (if any) in every  $2$-edge-coloring of $K_n$ where $n$ is large enough and each color appears in a positive fraction of $E(K_n)$?  It was conjectured by Bollobás (see \cite{cutler, FoSu}) and shown by Cutler and Montágh \cite{cutler} that, for every $\epsilon \in (0, \frac{1}{2}]$ and every positive integer $t$, there is an $N_0=N_0(\epsilon,t)$ such that every $2$-edge-coloring of $K_n$ with $n\geq N_0$ which has at least $\epsilon \binom{n}{2}$ edges in each color class contains a complete graph of order $2t$ where one color forms either a clique of order $t$ or two disjoint cliques of order $t$.  The Ramsey-type parameter $N_0$ concerning this result was further studied by Fox and Sudakov who showed that $N_0 < \epsilon^{-ct}$ for some absolute constant $c$, and this is tight up to the value of $c$  \cite{FoSu}. It was shown in \cite{CHM-K_4} that, for $n \equiv 0, 1 \pmod 4$, there is a $2$-edge-coloring of $K_n$ in which each color has exactly $1/2$ the edges of $K_n$ and such that one of the colors induces either a clique or two disjoint cliques. Hence, even if we assume $\epsilon$ as large as possible, namely $\epsilon = \frac{1}{2}$, the only $2$-colored unavoidable patterns we can hope for have to be contained in these two particular colorings of $K_n$. 
It was shown in \cite{CHM_unav} that, in $2$-edge-colorings of $K_n$, the existence of a copy of $K_{2t}$ where one color forms either a clique of order $t$ or two disjoint cliques of order $t$ is forced as soon as certain a subquadratic amount of edges for both colors is surpassed. To discuss this extremal aspect of the problem precisely, we define a Turán-type parameter, see Definition \ref{def} below. 

Throughout the paper, we will use the following notation. For a graph $G$, we denote by $V(G)$ and $E(G)$ its vertex set and edge set, respectively. Given a $2$-edge-coloring of $K_n$, that  is, a mapping $f: E(K_n) \to  \{red, blue\}$ or, equivalently, a partition $E(K_n) = E(R) \cup E(B)$  where we implicitly assume that  $R$ and $B$ are the graphs induced by the red and the blue edges, respectively, we denote by $e(R)$ and $e(B)$  the number  of red and blue edges. Given a set of vertices $U\subseteq V(K_n)$, we denote by $R[U]$ (respectively, $B[U]$) the graph induced by the red edges (respectively, by the blue edges) having both end vertices in $U$.  For a given graph $H$ of order $n(H)\leq n$, we say that a $2$-edge-coloring of $K_n$ contains an \emph{induced monochromatic} copy of $H$ if there is a set of vertices $U\subseteq V(K_n)$, with $|U|=n(H)$, such that either $R[U]\cong H$ or $B[U]\cong H$. Given two graphs $G$ and $H$, we denote by $G \cup H$ the disjoint union of them, and $\overline{G}$ denotes the complement of the graph $G$, i.e. the graph defined on the same vertex set $V(G)$ and edge set $\overline{E(G)}$. Finally, for an integer $r \ge 1$, $rG$ will denote the graph consisting of $r$ vertex disjoint copies of $G$. Further, we denote by $K_{s,t}$ the \emph{complete bipartite graph} with partition sets on $s$ and $t$ vertices, and by $S_{s,t}$ the \emph{complete split graph} which consists of one set on $s$ vertices building a clique, another set of $t$ independent vertices, and between both sets all $st$ edges are present. 

When looking for a complete graph $K_q$ with certain red-blue pattern, say where the blue edges induce a graph $G_b$ and the red edges induce the graph $\overline{G_b}$, inside a complete graph $K_n$ equipped with a red-blue coloring of its edges, where $q \le n$, one can instead search for an induced blue copy of $G_b$ or an induced red copy of $\overline{G_b}$. Note that it is the same to look for a monochromatic induced copy of a graph $G$ or of a monochromatic induced copy of its complement $\overline{G}$.  
In this sense, the $2$-colored patterns mentioned above can be described as induced monochromatic members from $\{K_{t,t}, S_{t,t}\}$. We now give the following definition.

\begin{definition}\label{def}
For a family of graphs $\mathcal{F}$ not containing complete or empty graphs, 
let $\ex_2(K_n, \mathcal{F})$ be the largest integer $m \le \frac{1}{2} \binom{n}{2}$ such that, for large enough $n$, there is a $2$-edge coloring of $K_n$ where the smallest color class is of size $m$ and such that it contains no members of $\mathcal{F}$ as induced monochromatic subgraphs. If there is no such $m$, we set $\ex_2(K_n, \mathcal{F})=\infty$. 
\end{definition} 

Equivalently, $\ex_2(K_n, \mathcal{F})$ is the minimum integer $m$ such that every $2$-edge coloring of $E(K_n)$ with more that $m$ edges in each color class contains an induced monochromatic member of $\mathcal{F}$. If $\ex_2(K_n, \mathcal{F})=\infty$, this means that there we will be, for an infinite set of $n$'s, a coloring of the edges of $K_n$ having no monochromatic induced member from $\mathcal{F}$. Note that we are excluding complete and empty graphs as members of the family $\mathcal{F}$ because, by Ramsey's theorem, \emph{every} $2$-edge coloring of $E(K_n)$ contains an (induced) monochromatic complete graph $K_k$ or $\overline{K_k}$, for large enough $n = n(k)$, and clearly these graphs are the only cases where such a situation happens (just consider a monochromatic coloring). See \cite{AxGo, FoSu_indRam, Ill} for related Ramsey problems concerning induced subgraphs.

The result by Cutler and Montágh described above yields that 
\begin{equation}\label{eq:CM}
\ex_2(K_n, \mathcal{F}_t)= \mathcal{O}(n^2)
\end{equation}
where $\mathcal{F}_t = \{K_{t,t}, S_{t,t}\}$, for $t \ge 2$. Moreover, it was shown in \cite{CHM_unav}  that, for  every integer $t \ge 2$,  there exists a $\delta=\delta(t)$ such that 
\begin{equation}\label{eq:CHM}
\ex_2(K_n, \mathcal{F}_t) = \mathcal{O}(n^{2-\delta}).
\end{equation} 
Shortly after,  Gir\~ao and Narayanan \cite{GiNa} proved that,  for every $t\geq 3$,  $\delta(t)=1/t$, that is:
\begin{equation}\label{eq:GiNa}
\ex_2(K_n, \mathcal{F}_t) = \mathcal{O}(n^{2-\frac{1}{t}}),
\end{equation}
and they showed, conditioned to the veracity of the well-known K\H{o}vari-S\'os-Tur\'an conjecture claiming $\ex(n, K_{t,t})=\Omega(n^{2-1/t})$ \cite{KST}, that this is best possible up to the involved constants. Moreover, they exhibited the degeneracy of the case $t=2$ \cite{GiNa} where it turns out that
\begin{equation}\label{eq:GiNa_t=2}
\ex_2(K_n, \mathcal{F}_2) = \Theta (n).
\end{equation}

For more recent results concerning unavoidable patterns in colorings of the complete graph, we refer the reader to \cite{BHMM, BLM, GiHa, GiMu, MuTa}. 

\subsection{Our current contribution}
In Section \ref{sec:evolution}, we analyze how the unavoidable patterns evolve with respect to the order of magnitude of the number of edges that are assumed in each color (with respect to $n$, the order of the complete graph that is $2$-colored). We divide the section into three parts, distinguishing among subquadratic, linear and constant orders. In each part, we have a general result that yields the unavoidable patterns guaranteed by this order of magnitude of the number of edges in each color and we prove the sharpness. Moreover, we also analyze the effect of forbidding an induced matching in each of the colors. Similar to the effect on Ramsey's theorem that is assumed to happen in view of the Erd\H{o}s-Hajnal conjecture, it turns out that, in $2$-colorings of $K_n$ with forbidden substructures in each color, the $2$-colored unavoidable patterns emerge already with much weaker assumptions on the presence of the colors. We describe now the results with more detail.

In Section \ref{subsec:subquadr}, we generalize the result in (\ref{eq:GiNa}) establishing an asymmetric and stronger version of the result by showing that, for positive integers $s, t$ with $t \ge \max\{2, s\}$:
\begin{equation}\label{eq:main}
\ex_2(K_n, \{K_{s,T},S_{t,t}\}) = \mathcal{O}(n^{2-\frac{1}{s}}),
\end{equation} where, again, $T$ is a large number that depends on $t$ (see Theorem \ref{thm:general} for a precise statement).

We would like to emphasize here that the proof of this result is different from the proofs of the symmetric case. The proof of (\ref{eq:GiNa}) given in \cite{GiNa} is quite entangled and the case $t = 2$ has to be covered separately. On the other hand, based on the short and concise proof of (\ref{eq:CHM}) given in \cite{CHM_unav} and with the same dependent random choice argument as in \cite{GiNa},  a very short proof of (\ref{eq:GiNa}) is given again in \cite{BHMM}. However, adapting the latter proof to an asymmetric version yields a much weaker result than what we get in (\ref{eq:main}), see the discussion after the proof of Theorem \ref{thm:general}.

By means of (\ref{eq:main}), it becomes evident how unavoidable patterns evolve with respect to the granted minimum conditions on the amount of edges in each color. Let $s, t$ be positive integers with $t \ge \max\{2, s\}$, and define $\mathcal{F}_{s,t} = \{K_{s,t}, S_{t,t}\}$ (see Figure \ref{fig:patterns} for an illustration of the monochromatic induced patters determined by this family). Note that, when $s = t$, we simply set $\mathcal{F}_t$ instead of $\mathcal{F}_{t,t}$, coinciding with the definition above. Moreover, making use of the Alon-Rónyai-Szabó construction \cite{ARS}, we show that $\ex_2(K_n, \mathcal{F}_{s,t}) = \Omega(n^{2-\frac{1}{s}})$ for $s \ll t$, settling unconditional sharpness in (\ref{eq:GiNa}). Moreover, we prove that, when $s = 1$ or $s = 2$, (\ref{eq:main}) is sharp for every $t \ge s+1$ (the case $s = t = 2$ is already covered in (\ref{eq:GiNa_t=2})), see Theorem \ref{thm:st_sharp}. 

 \begin{figure}[h]
\begin{center}
	\includegraphics[scale=0.5]{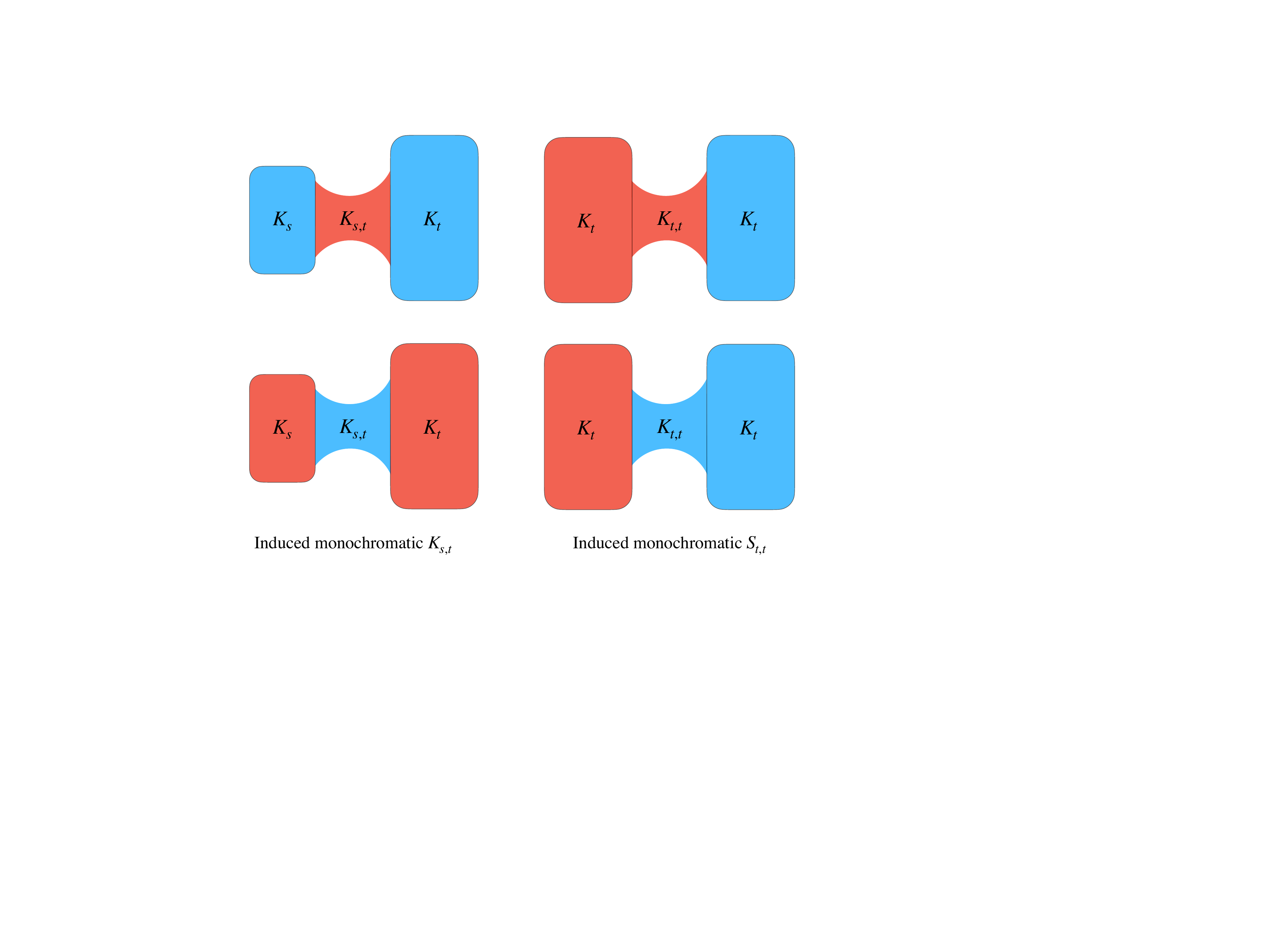}
	\caption{Colored patterns obtained by the family $\mathcal{F}_{s,t}= \{K_{s,t}, S_{t,t}\}$.}
		\label{fig:patterns}
	\end{center}
\end{figure}

We continue in Section \ref{subsec:linear} with an assumption of an $O(n)$-amount of edges in each color. Observe that Theorems \ref{thm:general} and \ref{thm:L_s,t-sharp} include the linear case, so the general results on unavoidable patterns for this setting are actually corollaries from the latter (Corollaries \ref{cor:star} and \ref{cor:star_sharp}). However, we want to make this distinction because it is very interesting to see what happens if, for some  positive integer $r$, an induced matching $rK_2$ is forbidden in each of the colors. It turns out that, in such a setting, the patterns in $\mathcal{F}_{s,t}$, where $t \ge s \ge 2$, are already unavoidable with just a linear amount of edges in each color, see Theorem \ref{thm:linear-tK_2}. In other words, we have that, with at least a linear amount of edges in each color, either an induced monochromatic $K_{s,t}$, or an induced monochromatic $S_{t,t}$, or an induced monochromatic $rK_2$ is unavoidable. Moreover, it is shown in Theorem \ref{thm:L_s,t-sharp} that this is asymptotically sharp. Setting $\mathcal{L}_{r,s,t} =\{r K_2\} \cup \mathcal{F}_{s,t}$, we thus have 
\begin{equation}
    \ex_2(K_n, \mathcal{L}_{r,s,t})= \Theta(n).
\end{equation}

Section \ref{subsec:constant} deals with the constant case. That is, assuming there is at least certain (large enough) constant amount of edges in each color, we want to know if there are some $2$-colored unavoidable patterns. Indeed, we are able to show that either a monochromatic induced star $K_{1,t}$ or a monochromatic induced matching $rK_2$ is unavoidable. This is clearly asymptotically sharp, and thus
\begin{equation}
    \ex_2(K_n, \mathcal{L}_{r,1,t})= \Theta(1).
\end{equation}
This is especially interesting because of its relation to Delta-systems and the fact that $\ex(n, \{K_{1,t}, tK_2\}) = \mathcal{O}(1)$ \cite{ABHS,ErRa}. We finish Section \ref{sec:evolution} showing that all the obtained $2$-colored patterns are unique, meaning that, given the condition on the minimum amount of edges in each color, we cannot hope for other unavoidable patterns. \\

In the context of balanceable graphs, which were defined in \cite{CHM_unav, CHM-K_4}, we investigate in Section \ref{sec:bal} what are the extremal cases of the \emph{balancing number} $\bal(n,G)$ with respect to the set of all graphs that are balanceable. To be precise, a graph $G$ is \emph{balanceable} if, for sufficiently large $n$, there is a minimum integer $k = k(n)$ such that, any $2$-edge coloring of $K_n$ with more than $k$ edges in each color contains a \emph{balanced} copy of $G$, i.e. a copy of $G$ with half (ceiling or floor) the edges in each color. If this integer $k$ exists, we set $\bal(n,G) = k$, else, namely if $G$ is not balanceable, $\bal(n,G) = \infty$. Balanceable graphs have been characterized in \cite{CHM_unav} by means of the unavoidable patterns $\mathcal{F}_t$ (see Theorem \ref{thm:char-BP}), and have been further studied in \cite{CHM-K_4, CLZ19, DHV20, DHV_Kn, EHMV_Fibo}. By (\ref{eq:GiNa}) and this characterization, it follows that a balanceble graph $G$ of order $t$ has $\bal(n,G) = \mathcal{O}(n^{2-\frac{1}{t}})$. We show here that, for every $\varepsilon > 0$, there are infinitely many graphs $G$ for which $\bal(n,G) = \Omega(n^{2-\varepsilon })$ (Theorem \ref{thm:large_bal}). On the other extreme, there are also graphs where the balancing number is just a constant $C$ depending only on the graph $G$ (not on $n$), i.e. $\bal(n,G) = C$. In Theorem \ref{thm:characterization}, we characterize the latter ones.\\

As explained above for the constant case, the induced monochromatic matching is one of the possible unavoidable patterns. However, we can also see it in the setting of forbidding it as substructure in each of the colors, which has the effect of forcing the induced monochromatic star. This is also the reason why we consider induced monochromatic matchings in the linear case, too. More precisely, we have seen that, in $2$-colorings of $K_n$ with forbidden induced monochromatic matchings of certain size, the $2$-colored unavoidable patterns $\mathcal{F}_{s,t}$ emerge already with much weaker assumptions on the presence of the colors. In view of the Erd\H{o}s-Hajnal conjecture, this resembles the effect on Ramsey's theorem that is assumed to happen when forbidding certain substructures. A natural way to continue this investigation here is to explore what is the effect of forbidding other substructures. We answer this question in Section \ref{sec:effect}. \\

Finally, we conclude in Section \ref{sec:multic} presenting a possible multicolor version of the effect of forbidding monochromatic induced $rK_2$'s.\\

\section{Evolution of the unavoidable patterns}\label{sec:evolution}
In this section we study how unavoidable colored patterns evolve with respect to the minimum requirement on the presence of edges in each color class that is asked. As it was mentioned before, provided $n$ is sufficiently large, no restrictions on the coloring leaves us only to the existence of (induced) monochromatic cliques (Ramsey's theorem); on the other hand, we already mentioned that every $2$-edge-coloring of $K_n$ with a positive fraction of  $\binom{n}{2}$ in each color class forces the existence of an induced monochromatic member from $\mathcal{F}_{t} = \{K_{t,t}, S_{t,t}\}$  \cite{cutler, FoSu}, and that these are the only possible unavoidable patterns that we can hope for, even if we assume to have $\frac{1}{2}\binom{n}{2}$ edges in each color class. Here, we study what happens in between. First we show that, for all positive integers $s$ and $t$ with $1 \le s \leq t$, a subquadratic amount  (on  $n$) of edges in each color class forces the existence of an induced monochromatic member from $\{K_{s,T}, S_{t,t}\}$ where $T$ is a large number depending on $t$ (see Theorem \ref{thm:general} for a precise statement);  in particular, this implies the existence of an induced monochromatic member of $\mathcal{F}_{s,t} = \{K_{s,t}, S_{t,t}\}$ which  provides a stronger version of Theorem 1.1 in \cite{GiNa} that deals with the case $s=t$ (see also, Theorem~2.3 in \cite{BHMM}). In order to prove Theorem \ref{thm:general}, we make use of the following version of the dependent random choice lemma (see also \cite{GiNa}).

\begin{lemma}[\cite{FoSu_DRC}]\label{lemma:drc}
For all $K,s\in\mathbb{N}$, there exists a constant $C=C(K,s)$ such that any graph with at least $Cn^{2-1/s}$ edges contains a set $S$ of $K$ vertices in which each subset $X\subseteq S$ with $s$ vertices has a common neighborhood of size at least $K$. 
\end{lemma}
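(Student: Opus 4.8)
The plan is to prove this by the dependent random choice method, combining a first-moment estimate on a randomly chosen common neighborhood with a deletion step. Write $d = 2e(G)/n$ for the average degree of the given graph $G=(V,E)$ on $n$ vertices, so that the hypothesis $e(G) \ge Cn^{2-1/s}$ yields $d \ge 2Cn^{1-1/s}$. I would sample a multiset $T=\{v_1,\dots,v_s\}$ of $s$ vertices independently and uniformly at random from $V$ (sampling with repetition, which only enlarges common neighborhoods) and set $A=\bigcap_{i=1}^{s} N(v_i)$ to be the set of common neighbors of $T$. The goal is to show that, with positive probability, $A$ is large and contains few $s$-subsets with small common neighborhood.

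The first key step is to lower-bound the expected size of $A$ by convexity. A fixed vertex $u$ lies in $A$ exactly when all $s$ sampled vertices are neighbors of $u$, so $\Pr[u\in A]=(d(u)/n)^s$, and Jensen's inequality applied to the convex function $x\mapsto x^s$ gives
\begin{equation*}
\mathbb{E}[|A|]=\sum_{u\in V}\left(\frac{d(u)}{n}\right)^s \ge n\left(\frac{1}{n}\sum_{u\in V}\frac{d(u)}{n}\right)^s=\frac{d^s}{n^{s-1}}\ge (2C)^s.
\end{equation*}
This is exactly where the exponent $2-1/s$ in the edge hypothesis is calibrated: it makes the powers of $n$ cancel and leaves a quantity that can be made arbitrarily large by increasing $C$.

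The second step controls the undesirable $s$-subsets. Call an $s$-subset $X\subseteq V$ \emph{bad} if its common neighborhood $N(X)$ has fewer than $K$ vertices, and let $Y$ count the bad $s$-subsets contained in $A$. For a fixed bad $X$, the event $X\subseteq A$ forces each of the $s$ samples to lie in $N(X)$, so $\Pr[X\subseteq A]=(|N(X)|/n)^s<(K/n)^s$, and a union bound gives $\mathbb{E}[Y]<\binom{n}{s}(K/n)^s\le K^s/s!$. By linearity of expectation,
\begin{equation*}
\mathbb{E}\bigl[|A|-Y\bigr]\ge (2C)^s-\frac{K^s}{s!}\ge K,
\end{equation*}
provided $C=C(K,s)$ is chosen large enough (for instance $(2C)^s\ge K^s+K$). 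Fixing an outcome of $T$ that attains at least this expectation and deleting one vertex from each bad $s$-subset of $A$ produces a set $U$ with $|U|\ge |A|-Y\ge K$ in which every $s$-subset has common neighborhood of size at least $K$; any $K$-subset $S$ of $U$ is then as required.

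I do not expect a genuine obstacle, as this is a textbook application of dependent random choice; the only point demanding care is balancing the two moments, that is, ensuring the convexity lower bound on $\mathbb{E}[|A|]$ dominates the union bound on $\mathbb{E}[Y]$. This balance is precisely what the choice of sample size $s$ together with the edge count $\Omega(n^{2-1/s})$ delivers, and it is also the reason the same exponent reappears in the bound \eqref{eq:main} that this lemma is used to prove.
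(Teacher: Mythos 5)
Your proof is correct and is precisely the standard dependent random choice argument (first-moment bound via Jensen on $|A|$, union bound on bad $s$-subsets, then deletion), which is the proof given in the cited reference \cite{FoSu_DRC}; the paper itself states this lemma without proof, quoting that source. All the estimates check out, including the calibration $(2C)^s - K^s/s! \ge K$ and the deletion step yielding $|U| \ge |A| - Y \ge K$.
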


\subsection{Subquadratic amount of edges in each color}\label{subsec:subquadr}

 We denote by $R(k)$ the (diagonal) $2$-color-Ramsey number (that is, the smallest integer for which every $2$-edge-coloring of $K_n$, with $n\geq R(k)$, contains a monochromatic $K_k$), and  by $BR(t)$ the bipartite Ramsey number (that is, the smallest integer for which every $2$-edge-coloring of $K_{n,n}$, with $n\geq BR(t)$, contains a monochromatic $K_{t,t}$). \\
 
\begin{theorem}\label{thm:general}
For every pair of positive integers $s$ and $t$ with $t \ge \max\{2, s\}$, there is a constant $C= C(s,t)$ such that, if $n$ is large enough,   then  every $2$-edge-coloring $E(K_n) = E(R) \cup E(B)$ with $\min \{ e(R), e(B)\} \ge C n^{2-\frac{1}{s}}$  contains an induced monochromatic member from $\{K_{s,BR(t)},S_{t,t}\}$, in particular, an induced monochromatic member of $\mathcal{F}_{s,t}$.
\end{theorem}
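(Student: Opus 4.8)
The plan is to prove the contrapositive-flavored statement directly: assuming $\min\{e(R),e(B)\}\ge Cn^{2-1/s}$ and that there is no induced monochromatic $S_{t,t}$, I would produce an induced monochromatic $K_{s,BR(t)}$. Without loss of generality take the red graph $R$ to be the color with at least $Cn^{2-1/s}$ edges. The first step is to apply the dependent random choice lemma (Lemma~\ref{lemma:drc}) to $R$ with parameter $K=K(t,s)$ chosen large enough (I will fix its value at the end so that the final Ramsey/bipartite-Ramsey arguments go through). This yields a set $S$ of $K$ vertices such that every $s$-subset $X\subseteq S$ has a common red neighborhood of size at least $K$. The point of using $K_{s,BR(t)}$ rather than $K_{s,t}$ in the conclusion is exactly that the large common neighborhood guaranteed by the lemma is what feeds a subsequent bipartite Ramsey argument, which is why $BR(t)$ appears.

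\medskip

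Next I would analyze the coloring \emph{inside} the set $S$. Since $|S|=K$ is large, by Ramsey's theorem $S$ contains a monochromatic clique on many vertices. I would separate two cases according to its color. If $S$ contains a \emph{blue} clique $W$ of size $s$ (or, more carefully, I would want a set of $s$ vertices that is blue-complete so as to sit on the ``independent'' side of a split graph), the strategy is to use the common red neighborhood of $W$ guaranteed by the lemma to build the split graph $S_{t,t}$: the $W$-side supplies the independent (blue) part while a red clique extracted from the large common neighborhood supplies the clique part, with the cross edges being red. If instead $S$ yields a large \emph{red} clique, I would use it together with the common red neighborhoods to locate a red $K_{s,BR(t)}$ directly, or again route through $S_{t,t}$. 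In either branch, the guarantee ``every $s$-subset of $S$ has a common red neighborhood of size $\ge K$'' is what lets me attach a large, uniformly-colored block to a fixed small set.

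\medskip

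The technical heart is extracting a \emph{clean, induced} bipartite or split pattern from these nearly-structured pieces. Given an $s$-set $X\subseteq S$ with a common red neighborhood $N$ of size $\ge K$, the edges from $X$ to $N$ are all red by construction, but I must control two further things: the coloring \emph{within} $N$, and the coloring \emph{within} $X$ (to make the pattern genuinely induced). For $N$, I would pass to a monochromatic clique inside $N$ via Ramsey (hence I need $K\ge R(\text{something like }BR(t))$): a red clique in $N$ together with $X$ gives a red $K_{s,|\text{clique}|}$ whose cross edges and one side are red — but to realize $K_{s,BR(t)}$ as an \emph{induced} monochromatic subgraph I need the other side ($X$) to be red-independent, i.e.\ blue-complete, which is where the bipartite Ramsey number on the $K\times K$ block lets me split both sides consistently and avoid the $S_{t,t}$ that we assumed absent. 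Concretely, the $BR(t)$-threshold ensures that any red/blue coloring of the bipartite block between a refined $X$ and a refined $N$ contains a monochromatic $K_{t,t}$, and I then argue that a blue $K_{t,t}$ sitting atop an all-red attachment forces exactly an induced $S_{t,t}$, contradicting the assumption, so the monochromatic $K_{t,t}$ must be red, delivering the induced $K_{s,BR(t)}$ (or its sub-copy $K_{s,t}$) as desired.

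\medskip

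I expect the main obstacle to be the bookkeeping that makes every extracted pattern \emph{induced}: each time I invoke Ramsey, bipartite Ramsey, or the dependent random choice lemma, I must track which pairs are forced red, which are forced blue, and which remain uncontrolled, and then show that \emph{every} uncontrolled configuration either completes an induced $K_{s,BR(t)}$ or forces the forbidden $S_{t,t}$. Choosing the single constant $K=K(s,t)$ large enough to survive the nested applications — roughly $K$ must dominate $R(BR(t))$ and related Ramsey numbers — is routine once the case analysis is laid out, and correspondingly the constant $C=C(s,t)$ in Lemma~\ref{lemma:drc} is then determined by this choice of $K$; I would defer pinning down the exact numerics until the case structure is fixed.
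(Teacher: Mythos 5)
Your opening move matches the paper's: apply the dependent random choice lemma to the red graph with $K$ on the order of $R(BR(t)+1)$, then run Ramsey and bipartite Ramsey arguments inside the resulting set and inside common neighborhoods. But the case analysis you sketch breaks at several concrete points. First, the claim that ``a red clique in $N$ together with $X$ gives a red $K_{s,|\mathrm{clique}|}$'' is false: an \emph{induced} red $K_{s,m}$ requires both sides to be red-\emph{independent} (i.e.\ blue cliques) with red cross edges, so the clique extracted from the common neighborhood must be blue, not red. Second, applying bipartite Ramsey ``between a refined $X$ and a refined $N$'' cannot work: $|X|=s$ may be far smaller than $BR(t)$, and all $X$--$N$ edges are already red by the choice of $N$, so there is nothing left to Ramsey there. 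The bipartite Ramsey step has to be applied between two \emph{large} monochromatic cliques of \emph{opposite} colors, each of size at least $BR(t)$, and your sketch never produces such a pair. The paper's structural move, which you are missing, is to first extract a monochromatic clique $Y$ of size $BR(t)+1$ inside $S$ (not merely an $s$-set), take an $s$-subset $X\subseteq Y$, and then extract a second monochromatic clique $W$ of size $BR(t)+1$ inside the common red neighborhood of $X$; if $Y$ and $W$ have different colors, bipartite Ramsey between them yields a monochromatic $K_{t,t}$ which, together with the clique/anticlique structure on its two sides, is an induced monochromatic $S_{t,t}$, while if both are blue, $X\cup W$ already induces a red $K_{s,BR(t)}$. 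Relatedly, your plan to use a blue $s$-clique as the independent side of $S_{t,t}$ fails outright when $s<t$, since that side needs $t$ vertices and the lemma only controls $s$-subsets.

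Third, and most seriously, you have no mechanism for the case in which both cliques come out red: everything you have built is then red, and a red $s$-clique whose common red neighborhood contains a red clique is just a large red clique, yielding neither target pattern. This is exactly where the hypothesis that \emph{both} color classes have at least $Cn^{2-1/s}$ edges must be used a second time: the paper reapplies the dependent random choice lemma to the blue graph and either finishes directly or obtains a blue clique of size $BR(t)+1$, which together with the red clique $W$ supplies the opposite-colored pair needed for the bipartite Ramsey step. Your phrase ``without loss of generality take the red graph $R$ to be the color with at least $Cn^{2-1/s}$ edges'' suggests you intend to exploit the density of only one color, which cannot suffice. As written, the proposal is not a proof; the skeleton is right but the all-red case and the induced-pattern bookkeeping both require the two-clique architecture described above.
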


\begin{proof}
Given $s$ and $t$, positive integers with $t \ge \max\{2, s\}$, let $C$ be the constant provided by Lemma~\ref{lemma:drc} for $s$ and $K=R(BR(t)+1)$. Then, for every $n$ large enough and any $2$-edge-coloring of $K_n$ with $Cn^{2-1/s}$ edges in each color class, we can apply Lemma \ref{lemma:drc} to the red graph $R$ as well as to the blue graph $B$. In the fist case, we get a $K$-set of vertices, named $S_R$,  with the property that each $s$-subset $X\subseteq S_R$  has a common red-neighborhood of size at least $K$. Since $K=R(BR(t)+1)$,  there is a set $Y\subset S_R$ on $BR(t)+1$ vertices which induce a monochromatic complete graph. Now, choose any $s$-set $X\subset Y$ and take from its red neighbour, which satisfies $|N_R(X)|\geq K=R(BR(t)+1)$, a set $W$ that induce a complete monochromatic graph of order $BR(t)+1$. 

In summary, we have $Y$ and $W$, two (not necessarily disjoint) monochromatic cliques each of order $BR(t)+1$, and an $s$-subset $X\subset Y$ with $X \cap W = \emptyset$ such that every edge between $X$ and $W$ is red.

Now we analyze the cases according to the colors in $Y$ and $W$. If $Y$ and $W$ induce monochromatic cliques of different color,  then $|Y\cap W|\leq 1$ and so there are subsets $Y'\subset Y$ and $W'\subset W$ such that $Y'$ and $W'$ induce two disjoint monochromatic complete graphs with $BR(t)$ vertices each. Hence, there is a monochromatic $K_{t,t}$ between $Y'$ and $W'$, thus forming  an induced monochromatic copy of $S_{t,t}$. Now we can assume that $Y$ and $W$ have the same color. If this color is blue, the vertex set $X\cup W$ induce a red  $K_{s,BR(t)}$, and we are done. It remains to consider the case where both $Y$ and $W$ are red.

Keeping in mind that we have a red clique $W$ of order $BR(t)+1$, we proceed in the same way as above,  applying  Lemma \ref{lemma:drc} to the blue graph $B$, until we get either an induced monochromatic $S_{t,t}$, an induced blue $K_{s,BR(t)}$ or a set $Z$ of $BR(t)+1$ vertices inducing a blue clique. In the first two cases, we are done. In the third case, we have one red clique $W$ and one blue clique $Z$, each of order $BR(t)+1$. As before, this leads to a monochromatic copy of $S_{t,t}$, which completes the proof.   
\end{proof}

Observe that, if we consider the symmetric case $s = t$, we obtain the family $\{K_{t,BR(t)},S_{t,t}\}$, which is a slightly stronger version of (\ref{eq:GiNa}) (which is Theorem 1.1 in \cite{GiNa}). Moreover, if we adapt
 the elegant proofs of (\ref{eq:GiNa}) given in \cite{BHMM, CHM_unav}  to an asymmetric version, they yield a much weaker result than Theorem \ref{thm:general}: the most one obtains is the existence of a monochromatic induced member from $\{S_{s,T}, S_{T,s}, K_{s,T},S_{t,t}\}$, where $T = BR(t)$, which is not only a larger family than the one in Theorem \ref{thm:general} but it neither makes sense for $s=1$, where $S_{BR(t),1} \cong K_{BR(t)} \cup K_1$ is already covered by Ramsey's theorem.

In the statement of  Theorem \ref{thm:general}, we emphasize that  every $2$-edge-coloring of $K_n$, with $n$ large enough, and $C n^{2-\frac{1}{s}}$ edges in each color class contains, in particular, an induced monochromatic member from $\mathcal{F}_{s,t}=\{K_{s,t},S_{t,t}\}$ because this is a family that will be relevant in the in subsequent sections. 

Next, we analyze the sharpness of Theorem \ref{thm:general}. For this purpose,  we recall some classical parameters and known results in Extremal graph theory. The Zarankievicz number ${\rm z}(m,n;s,t)$ \cite{Zar51}, is  the maximum number of edges in a $K_{s,t}$-free bipartite graph with bipartition $X \cup Y$ such that $|X| = m$ and $|Y| = n$. We write ${\rm z}(n;t)$ instead of ${\rm z}(n,n;t,t)$. K\H{o}vari, S\'os and Tur\'an \cite{KST} showed that 
\begin{equation}\label{eq:mnst}
{\rm z}(m,n;s,t) < (s-1)^{\frac{1}{t}}(n-t+1)m^{1 - \frac{1}{t}}+(t-1)m,
\end{equation} 

and this  yields the following bound on the extremal number ${\rm ex}(n,K_{s,t})$, when $s\leq t$,  
\begin{equation}\label{eq:exst}
  {\rm ex}(n,K_{s,t}) \leq \frac{1}{2} \left((t-1)^{\frac{1}{s}}n^{2 - \frac{1}{s}}+(s-1)n \right).
\end{equation}
The K\H{o}vari-Sós-Turán conjecture states that the bound in (\ref{eq:exst})  gives the correct order of magnitude for every  $s,t \ge 2$ \cite{KST}. In other words,  it is believed that there are $K_{s,t}$-free graphs of order $n$ with $\Omega(n^{2 - \frac{1}{s}})$ edges for all integers $2\leq s\leq t$. This conjecture has been confirmed for $s=t = 2$ \cite{ERS66, Fur96}, $s = t = 3$ \cite{ARS, Bro66} and when $t \ge (s-1)! +1$ and $s \ge 2$ \cite{ARS, KRS} (this includes the previous cases, too); moreover, in all these cases, the graphs exposed to confirm the conjecture are bipartite graphs. Making use of the latter, we  will prove the following.

\begin{theorem}\label{thm:st_sharp}
For every $n$ sufficiently large and positive integers $s,t$ such that $t \ge \max\{3, (s-1)! + 1\}$, we have $\ex_2(K_n,\mathcal{F}_{s,t})= \Theta(n^{2-\frac{1}{s}})$.
\end{theorem}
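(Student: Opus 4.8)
The upper bound $\ex_2(K_n, \mathcal{F}_{s,t}) = \mathcal{O}(n^{2-\frac{1}{s}})$ is already delivered by Theorem~\ref{thm:general}: since $t \ge \max\{3,(s-1)!+1\} \ge \max\{2,s\}$, that theorem guarantees an induced monochromatic member of $\mathcal{F}_{s,t}$ as soon as each color class has at least $Cn^{2-\frac{1}{s}}$ edges, so that $\ex_2(K_n,\mathcal{F}_{s,t}) \le Cn^{2-\frac{1}{s}}$. The whole task is therefore to prove the matching lower bound $\ex_2(K_n, \mathcal{F}_{s,t}) = \Omega(n^{2-\frac{1}{s}})$ by exhibiting, for all large $n$, a $2$-edge-coloring of $K_n$ whose smaller color class has $\Omega(n^{2-\frac{1}{s}})$ edges and which contains no induced monochromatic copy of $K_{s,t}$ and none of $S_{t,t}$.

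For the construction, I would invoke the hypothesis $t \ge (s-1)!+1$, under which the Alon--R\'onyai--Szab\'o construction \cite{ARS,KRS} supplies a \emph{bipartite} $K_{s,t}$-free graph $H$ on $n$ vertices, with bipartition $V(H) = X \cup Y$, having $e(H) = \Omega(n^{2-\frac{1}{s}})$ edges. Color the edges of $H$ \emph{red} and \emph{all} remaining edges of $K_n$ \emph{blue}. Then the red graph is $R = H$, while the blue graph $B$ consists of the two cliques on $X$ and on $Y$ together with the bipartite complement of $H$ between $X$ and $Y$. Since $R$ is bipartite it has $o(n^2)$ edges, whereas $B$ contains $\binom{|X|}{2}+\binom{|Y|}{2} = \Omega(n^2)$ edges; hence red is the smaller color class, and it carries $\Omega(n^{2-\frac{1}{s}})$ edges, exactly as the definition of $\ex_2$ requires.

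Checking the red side is immediate: as $R=H$ is bipartite and $t \ge 3$, the red graph is triangle-free and cannot contain the clique $K_t \subseteq S_{t,t}$, so there is no induced red $S_{t,t}$; and since $H$ is $K_{s,t}$-free as a subgraph, it has no induced red $K_{s,t}$ either. The crux is the blue side, which a priori contains the two large cliques on $X$ and on $Y$. Here I would use the single key observation that $B$ has independence number at most $2$: any two vertices lying in the same part are non-adjacent in $R$ and hence adjacent in $B$, so a blue-independent set meets each of $X$ and $Y$ in at most one vertex. Since $K_{s,t}$ (through its side of size $t$) and $S_{t,t}$ (through its independent part of size $t$) both require an induced independent set of size $t \ge 3$, neither can occur in blue. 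This settles that the coloring avoids all of $\mathcal{F}_{s,t}$ in both colors.

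The constructions of \cite{ARS,KRS} are a priori available only for a sparse set of orders (governed by prime powers), so I would close the gap to \emph{all} sufficiently large $n$ by interpolation: take the largest admissible $n' \le n$, which satisfies $n' = \Theta(n)$ and hence yields $e(H) = \Omega(n'^{\,2-\frac{1}{s}}) = \Omega(n^{2-\frac{1}{s}})$, build the coloring on $n'$ vertices, and adjoin the remaining $n-n'$ vertices with all incident edges colored blue. This leaves $R$ unchanged (still bipartite and $K_{s,t}$-free) and keeps the blue independence number at most $2$, since each new vertex is blue-adjacent to everything. I expect this interpolation step, together with pinning down the precise form of the extremal bipartite graphs guaranteed under $t \ge (s-1)!+1$, to be the only genuinely delicate point; the core verification reduces, as above, to the two one-line facts that red is triangle-free and $K_{s,t}$-free while blue has independence number $2$.
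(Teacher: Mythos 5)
Your proposal is correct and follows essentially the same route as the paper: the upper bound comes from Theorem~\ref{thm:general}, and the lower bound from taking the bipartite $K_{s,t}$-free Alon--R\'onyai--Szab\'o/Koll\'ar--R\'onyai--Szab\'o graph as one color class, with the observation that the complementary color has independence number at most $2$ (a point the paper leaves implicit) ruling out both patterns on the dense side. The only caveat is that for $s=1$ the cited construction does not apply (it requires $s\ge 2$); there one instead uses a trivial bipartite graph of maximum degree $t-1$ with $\Theta(n)$ edges (or the paper's separate disjoint-cliques coloring), after which your argument goes through verbatim.
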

\begin{proof}
For the case that $s=1$ and $t \ge 3$, it is enough to exhibit a $2$-edge-coloring of $K_n$ with a linear number of edges in each color class but without monochromatic induced stars $K_{1,t}$. Let $E(K_n)= E(R)\cup E(B)$ where $B \cong \lfloor \frac{n}{t-1} \rfloor K_{t-1}$. Then, if $n$ is large enough, we have  $e(R)\geq e(B)= \lfloor \frac{n}{t-1} \rfloor \binom{t-1}{2}$, and clearly this coloring contains no monochromatic induced $K_{1,t}$. Similarly, for 
 $s \ge 2$ and $t \ge \max\{3, (s-1)! + 1\}$, the upper bound is given by Theorem \ref{thm:general}. For the lower bound, let $n$ be sufficiently large and consider the $2$-edge-coloring $E(K_n)= E(R) \cup E(B)$ where $B$ is the $K_{s,t}$-free bipartite graph on $Cn^{2 - \frac{1}{s}}$ edges constructed in \cite{ARS, KRS} and mentioned previous to this theorem. Observe that, since $B$ is bipartite and $t\geq 3$, the coloring cannot contain an induced monochromatic copy of a member from $\mathcal{F}_{s,t}$.
\end{proof}

Notice that the small cases not covered by this theorem (i.e. where $s \le 2 = t$) are those that are precisely degenerate: it is easily seen that $\ex_2(K_n, \mathcal{F}_{1,2}) = 0$ (to find an induced monochromatic copy of a member of $\mathcal{F}_{1,2}$ is equivalent to find a copy of a non-monochromatic triangle or a monochromatic induced $K_4$ minus an edge, but a non-monochromatic triangle appears as soon as there is at least one edge in each color), while $\ex_2(K_n, \mathcal{F}_{2,2}) = \Theta(n)$, as we already know by (\ref{eq:GiNa_t=2}).

\subsection{Linear amount of edges in each color}\label{subsec:linear}

The following corollary is immediate from Theorem \ref{thm:general} corresponding to the particular case $s=1$; it establishes that a linear  amount (on $n$) of edges in each color class forces, in particular,  the existence of an induced monochromatic star $K_{1,t}$.\\

\begin{corollary}\label{cor:star}
For every integer  $t \ge 2$, there is a constant $C= C(t)$ such that, if $n$ is large enough, then  every $2$-edge-coloring $E(K_n) = E(R) \cup E(B)$ with $\min \{ e(R), e(B)\} \ge C n$  contains an induced monochromatic member of $\{K_{1,BR(t)},S_{t,t}\}$, in particular, an induced monochromatic star $K_{1,t}$. 
\end{corollary}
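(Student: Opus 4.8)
The plan is to derive \Cref{cor:star} directly from \Cref{thm:general} by specializing $s=1$. First I would observe that the hypotheses of \Cref{thm:general} require $t \ge \max\{2,s\}$, so fixing $s=1$ leaves exactly the constraint $t \ge 2$, which matches the statement of the corollary. With $s=1$, the quantity $n^{2-\frac{1}{s}} = n^{2-1} = n$ becomes linear, so the threshold $C n^{2-\frac{1}{s}}$ in \Cref{thm:general} turns into $C n$, and I would take the constant $C = C(t) := C(1,t)$ to be precisely the constant supplied by \Cref{thm:general} for the pair $(s,t)=(1,t)$.

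The substantive content is then a one-line invocation: for $n$ large enough, any $2$-edge-coloring $E(K_n) = E(R)\cup E(B)$ with $\min\{e(R),e(B)\} \ge Cn = C n^{2-\frac{1}{1}}$ satisfies the hypothesis of \Cref{thm:general}, and hence contains an induced monochromatic member of $\{K_{1,BR(t)},S_{t,t}\}$. This is exactly the first asserted conclusion. The only remaining point is the parenthetical ``in particular, an induced monochromatic star $K_{1,t}$,'' and here I would note that $t \le BR(t)$, so $K_{1,t}$ is an induced subgraph of $K_{1,BR(t)}$; thus an induced monochromatic $K_{1,BR(t)}$ automatically yields an induced monochromatic $K_{1,t}$ (simply restrict to $t$ of the leaves). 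An induced monochromatic $S_{t,t}$ likewise contains an induced monochromatic $K_{1,t}$, since the split graph $S_{t,t}$ has a vertex in its clique side joined to all $t$ vertices of the independent side, and restricting to that center together with the $t$ independent vertices gives an induced $K_{1,t}$ in the same color. Either way, a monochromatic induced star $K_{1,t}$ is forced.

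I do not anticipate a genuine obstacle here, since the corollary is a straightforward specialization; the only mild subtlety worth flagging explicitly is the verification that $K_{1,t}$ sits as an \emph{induced} subgraph inside each of $K_{1,BR(t)}$ and $S_{t,t}$ in a color-preserving way. For $K_{1,BR(t)}$ this is immediate, and for $S_{t,t}$ one must be slightly careful to pick the center from the clique part and exactly the independent vertices as leaves so that no extra edge appears among the chosen vertices, guaranteeing the induced copy is genuinely a star and monochromatic. With that check in place, the proof is complete.
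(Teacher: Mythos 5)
Your proposal is correct and follows exactly the paper's route: the paper derives Corollary~\ref{cor:star} as the immediate specialization $s=1$ of Theorem~\ref{thm:general}, with the threshold $Cn^{2-1/s}$ collapsing to $Cn$. Your extra verification that an induced monochromatic $K_{1,t}$ sits inside both an induced monochromatic $K_{1,BR(t)}$ and an induced monochromatic $S_{t,t}$ (center taken from the clique side, leaves from the independent side) is the correct justification of the ``in particular'' clause, which the paper leaves implicit.
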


By Theorem \ref{thm:st_sharp}, the above bound is sharp for any $t \ge 3$. In particular, we have the following.

\begin{corollary}\label{cor:star_sharp}
For every $n$ sufficiently large and any integer $t \ge 3$, $\ex_2(K_n, K_{1,t})= \Theta(n)$.
\end{corollary}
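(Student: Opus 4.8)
The plan is to establish $\ex_2(K_n, K_{1,t}) = \Theta(n)$ for $t \ge 3$ by combining an upper bound and a lower bound of the correct order.

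For the upper bound, namely $\ex_2(K_n, K_{1,t}) = \mathcal{O}(n)$, I would invoke Corollary \ref{cor:star} directly. That corollary guarantees a constant $C = C(t)$ such that any $2$-edge-coloring of $K_n$ (for large $n$) with $\min\{e(R), e(B)\} \ge Cn$ contains an induced monochromatic member of $\{K_{1,BR(t)}, S_{t,t}\}$, and in particular an induced monochromatic star $K_{1,t}$. Translating this into the language of Definition \ref{def}, the presence of more than $Cn$ edges in each color class forces the single-graph family $\mathcal{F} = \{K_{1,t}\}$, so $\ex_2(K_n, K_{1,t}) \le Cn = \mathcal{O}(n)$.

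For the matching lower bound $\ex_2(K_n, K_{1,t}) = \Omega(n)$, I would exhibit an explicit coloring with a linear number of edges in each color avoiding an induced monochromatic $K_{1,t}$. This is exactly the construction already used in the $s=1$ case of the proof of Theorem \ref{thm:st_sharp}: color the edges so that the blue graph is $B \cong \lfloor \frac{n}{t-1} \rfloor K_{t-1}$, a disjoint union of cliques of order $t-1$, and the red graph is the complement. Since each blue component has only $t-1$ vertices, the maximum blue degree is $t-2 < t$, so there is no induced blue $K_{1,t}$; and because the red graph is a complete multipartite graph with all parts of size $t-1$, its complement structure likewise prevents an induced red $K_{1,t}$ (an induced red star would require an independent set of size $t$ in the neighborhood of a vertex in $R$, which is blocked since red non-neighbors lie in a single blue clique of size $t-1$). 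Counting edges gives $e(B) = \lfloor \frac{n}{t-1} \rfloor \binom{t-1}{2} = \Omega(n)$ and $e(R) \ge e(B)$, so the smallest color class has $\Omega(n)$ edges.

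Since both bounds match in order, the two together yield $\ex_2(K_n, K_{1,t}) = \Theta(n)$, completing the proof. I anticipate no substantial obstacle here, as this corollary is essentially a repackaging of Theorem \ref{thm:general} (via Corollary \ref{cor:star}) and Theorem \ref{thm:st_sharp} specialized to $s = 1$; the only point requiring mild care is verifying that the lower-bound construction genuinely forbids induced monochromatic stars in \emph{both} colors, but this is immediate from the bounded component size in blue and the corresponding structure of its complement in red.
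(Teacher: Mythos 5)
Your proof is correct and follows essentially the same route as the paper: the upper bound is Corollary \ref{cor:star} (i.e.\ Theorem \ref{thm:general} with $s=1$), and the lower bound is the $s=1$ construction from Theorem \ref{thm:st_sharp} with $B \cong \lfloor \frac{n}{t-1} \rfloor K_{t-1}$, for which you correctly verify that an independent set in $R$ is a blue clique of size at most $t-1 < t$, so neither color contains an induced $K_{1,t}$.
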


\noindent
Again, the case $t = 2$ falls out from this statement as $\ex_2(K_n, K_{1,2})= 0$, which is not difficult to see.\\

In the next theorem, we analyze the consequences of forbidding monochromatic induced matchings when an $\Omega(n)$-amount of edges in each color is guaranteed. It turns out that, in such colorings, just a linear amount of edges in each color is sufficient to force the existence of the induced monochromatic patterns from $\mathcal{F}_{s,t}$.

\begin{theorem}\label{thm:linear-tK_2}
Let $r,s,t$ be positive integers  such that $r \ge 2$, and $t \ge s \ge 2$. Then there is a constant $C= C(r,s,t)$ such that, if $n$ is large enough, then every coloring $E(K_n) = E(R) \cup E(B)$ with $\min \{ e(R), e(B)\} \ge C n$ without a monochromatic induced $rK_2$, contains an induced monochromatic member from $\mathcal{F}_{s,t}$.
\end{theorem}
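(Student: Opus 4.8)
The plan is to reduce the statement to the task of locating, inside the coloring, a red clique and a blue clique each of order at least $BR(t)$. Indeed, suppose we have vertex sets $A$ and $D$, meeting in at most one vertex, with $R[A]\cong K_{|A|}$ (a red clique), $B[D]\cong K_{|D|}$ (a blue clique) and $|A|,|D|\ge BR(t)$; passing to subsets we may take them disjoint. The coloring restricted to the complete bipartite graph between $A$ and $D$ contains, by definition of $BR(t)$, a monochromatic $K_{t,t}$, say on parts $A'\subseteq A$ and $D'\subseteq D$ with $|A'|=|D'|=t$. If these $t^2$ edges are red, then $R[A'\cup D']$ consists of the red $t$-clique $A'$, the red-independent set $D'$, and all edges between them red; that is precisely an induced red $S_{t,t}$. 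If they are blue, then $B[A'\cup D']$ is an induced blue $S_{t,t}$ by the symmetric reasoning. Since $S_{t,t}\in\mathcal{F}_{s,t}$, we are done in either case; thus it suffices to produce the two cliques (or, in some branches below, a monochromatic $K_{s,t}$ directly).

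To produce the cliques we exploit the forbidden matchings through a covering argument. Let $M$ be a maximum \emph{induced} red matching; since there is no induced red $rK_2$ we have $|M|\le r-1$, and its vertex set $W_R$ satisfies $|W_R|\le 2(r-1)$. By maximality, every red edge disjoint from $W_R$ must have an endpoint joined in red to $W_R$; hence $V\setminus(W_R\cup N_R(W_R))$ is red-independent, i.e.\ a blue clique. Symmetrically, a maximum induced blue matching yields a set $W_B$ with $|W_B|\le 2(r-1)$ such that $V\setminus(W_B\cup N_B(W_B))$ is a red clique. Because a blue clique and a red clique meet in at most one vertex, these two leftover sets are essentially disjoint. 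Setting $W=W_R\cup W_B$ (of bounded size) and grouping the remaining vertices according to the color profile of their edges to $W$, a pigeonhole argument over the at most $2^{|W|}$ profiles yields a set $V_0\subseteq V\setminus W$ of size $\Omega(n)$ all of whose vertices see each fixed $w\in W$ in the same color.

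It remains to analyse $V_0$, and this is where the main difficulty lies. If every vertex of $W_R$ is joined in blue to all of $V_0$, then $V_0$ is a blue clique of linear size, and symmetrically one may land on a red clique; should both occur we obtain the two cliques required above. The delicate case is when some $w\in W$ dominates $V_0$ in one color: then $w$ together with any monochromatic clique found inside $V_0$ (guaranteed in one of the two colors by a Ramsey argument applied to $V_0$, since $|V_0|=\Omega(n)$) enlarges a clique or builds one side of a $K_{s,t}$. The real obstacle, and the point where all the hypotheses ($e(R),e(B)\ge Cn$ together with the absence of both monochromatic induced $rK_2$'s) are simultaneously indispensable, is to rule out the possibility that one color has bounded clique number while the other carries all the structure (as happens, for instance, when one color is complete multipartite and the other a disjoint union of cliques). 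In such near-extremal configurations one color supplies no large clique, and one must instead extract an induced monochromatic $K_{s,t}$; verifying that the two linear edge-bounds prevent both colors from simultaneously failing to contain either a large clique or a large monochromatic biclique is the crux, and the constants must be chosen so that the losses from the pigeonhole over profiles and from the internal Ramsey step are absorbed by taking $C$ and $n$ large.
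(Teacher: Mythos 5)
There is a genuine gap: you set up a covering argument and then explicitly defer the decisive step. Two specific problems. First, the sets $V\setminus(W_R\cup N_R(W_R))$ and $V\setminus(W_B\cup N_B(W_B))$ need not be large — a single vertex of $W_R$ that is red-adjacent to almost everything makes the first set empty — so your route to two large cliques of opposite colors only works in the favorable profile case, and you acknowledge this. Second, and fatally, the remaining case (only one color yields large cliques, so the conclusion must be an induced monochromatic $K_{s,t}$ rather than $S_{t,t}$) is exactly where the hypothesis of no monochromatic induced $rK_2$ has to be used \emph{quantitatively}, and your proposal uses that hypothesis only to bound $|W_R|,|W_B|\le 2(r-1)$. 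That cannot suffice: the coloring in Theorem \ref{thm:L_s,t-sharp} has linearly many edges in each color, no induced monochromatic $rK_2$, and no induced monochromatic member of $\mathcal{F}_{s,t}$, so whatever closes this case must exploit the precise size of the constant $C$ against $r,s,t$, not merely the boundedness of a maximum induced matching. Declaring this ``the crux'' and asserting that the constants can be chosen to absorb the losses is not a proof of the theorem.

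For comparison, the paper closes exactly this case as follows. Choosing $C$ with $Cn\ge \ex(n,qK_2)$ produces a (not necessarily induced) matching of size $q$ in \emph{each} color; Ramsey applied to the untouched vertices gives a monochromatic $\ell$-clique $S'$ (say red), and Lemma \ref{lem:Ramsey-matrix} applied to the blue matching $v_{1,j}v_{2,j}$ extracts $L$ with $|L|=\ell$ so that $S_1=\{v_{1,j}\}_{j\in L}$ and $S_2=\{v_{2,j}\}_{j\in L}$ each induce a monochromatic clique. If either is blue, the majority color between it and $S'$ exceeds ${\rm z}(\ell,\ell;t,t)$ and yields an induced monochromatic $S_{t,t}$ (this part agrees with your first reduction). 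If both are red, one forms the auxiliary graph $F$ on $L$ whose edges record blue cross-pairs; an independent set of size $r$ in $F$ would be a blue \emph{induced} $rK_2$ (the matching edges are blue, everything else among those vertices is red), so $\alpha(F)<r$, and Tur\'an's bound forces $e(F)>\ell^2/(4r)\ge {\rm z}(\ell,\ell;s,t)$, hence a blue $K_{s,t}$ between the two red cliques $S_1,S_2$ — which is automatically induced. This auxiliary-graph/Zarankiewicz mechanism is the missing ingredient in your proposal; without it, or some substitute of equal force, the argument does not go through.
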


Before proving this, we need the following extension of Ramsey's theorem.  Recall that, we denote by $R(s)$ the classical (diagonal) $2$-color Ramsey number, that is, the minimum integer $R(s)$ such that, whenever $n\geq R(s)$, any $2$-edge-coloring  of $K_n$ contains a monochromatic copy of $K_s$.
\begin{lemma}\label{lem:Ramsey-matrix}
Let $\ell$ and $m$ be positive integers. Then there is an integer $Q = Q(\ell,m)$, such that, if $q \ge Q$, the following holds.  If $K_n$ is a complete graph of order $n = mq$ on vertex set $\{v_{i,j} \;|\; i \in [m], j \in [q]\}$ whose edges are red-blue colored, then there is a set $L \subseteq [q]$ with $|L| = \ell$ such that, for each $i \in [m]$, the set $\{v_{i,j} \;|\; j \in L\}$ induces a monochromatic $K_{\ell}$.
\end{lemma}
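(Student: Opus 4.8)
The plan is to reduce the statement to the classical multicolour version of Ramsey's theorem by compressing the $m$ separate row-colourings into a single colouring of an auxiliary ``column graph''. Observe first that the only edges of $K_n$ that play any role are the \emph{intra-row} edges $v_{i,j}v_{i,j'}$; the edges joining vertices of different rows are irrelevant to the conclusion and may be ignored throughout.

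The key step is the following encoding. Let $K_q$ be the complete graph on the column index set $[q]$, and assign to each edge $\{j,j'\}$ of $K_q$ the \emph{colour vector}
\[
\chi(\{j,j'\}) = \big(c_1,\dots,c_m\big) \in \{red,blue\}^m,
\]
where $c_i$ is the colour of the intra-row edge $v_{i,j}v_{i,j'}$ in the given colouring of $K_n$. Since there are exactly $2^m$ possible vectors, $\chi$ is a colouring of $E(K_q)$ with $2^m$ colours. I would then invoke the multicolour Ramsey theorem \cite{Ram}: for every number of colours $c$ and every $\ell$ there is an integer $R_c(\ell)$ such that any $c$-edge-colouring of $K_N$ with $N \ge R_c(\ell)$ admits a monochromatic $K_\ell$.

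Setting $Q = Q(\ell,m) := R_{2^m}(\ell)$ and assuming $q \ge Q$, the colouring $\chi$ of $K_q$ yields a set $L \subseteq [q]$ with $|L| = \ell$ on which $\chi$ is constant, say equal to $(c_1^\ast,\dots,c_m^\ast)$. Constancy of the vector is equivalent to constancy of each coordinate: for every fixed $i \in [m]$ and all $j,j' \in L$, the edge $v_{i,j}v_{i,j'}$ carries the single colour $c_i^\ast$ (note that this colour is allowed to depend on the row $i$). Hence, for each $i$, the set $\{v_{i,j} \;|\; j \in L\}$ induces a monochromatic $K_\ell$, which is precisely the required conclusion; monotonicity of Ramsey numbers guarantees the statement for all $q \ge Q$, not merely for $q = Q$.

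I do not expect a genuine obstacle here: once the intra-row edges are isolated and packaged into the product colouring $\chi$, the result follows at once from multicolour Ramsey. The only point deserving care is to recognise that we merely need each \emph{coordinate} of the colour vector to be constant on $L$, and that demanding the whole vector $\chi$ to be constant is a (stronger than necessary) sufficient condition that Ramsey's theorem supplies directly.
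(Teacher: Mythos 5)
Your proof is correct, but it takes a genuinely different route from the paper. The paper proceeds by \emph{iterating} the two-colour Ramsey theorem row by row: it sets $R^0(\ell)=\ell$, $R^{i}(\ell)=R(R^{i-1}(\ell))$ and $Q=R^m(\ell)$, then extracts a nested chain $[q]\supseteq L_1\supseteq L_2\supseteq\cdots\supseteq L_m=L$ of column sets, where at step $i$ the set $L_i$ of size $R^{m-i}(\ell)$ is chosen so that row $i$ restricted to $L_i$ is monochromatic; since each $L_i$ refines the previous ones, the final $L$ works simultaneously for all rows. You instead compress all $m$ rows at once into a single $2^m$-colouring of the column graph $K_q$ (the product colouring $\chi$) and apply multicolour Ramsey once, with $Q=R_{2^m}(\ell)$. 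Both arguments are complete and correct; the observation that inter-row edges are irrelevant, and that constancy of the full colour vector is stronger than what is needed, are both sound. The trade-off is mainly quantitative and aesthetic: your one-shot product-colouring argument yields a single-exponential-type bound $R_{2^m}(\ell)$ and a shorter proof, whereas the paper's iterated construction produces a tower-type bound $R^m(\ell)$ but uses only the two-colour Ramsey theorem, which is the form already introduced and used elsewhere in the paper. For the purposes of Theorem \ref{thm:linear-tK_2} and Theorem \ref{thm:forbidding_tH}, where only the existence of some constant $Q(\ell,m)$ matters, the two approaches are interchangeable.
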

\begin{proof}
Let $R^0(\ell)=\ell$, $R^1(\ell)=R(\ell)$ and, for $i\geq 1$,  let $R^{i}(\ell)=R(R^{i-1}(\ell))$. Set $Q=R^m(\ell)$. In the first step, consider the $2$-edge-colored complete graph induced by the vertices  $\{v_{1,j} \;|\;  j \in [q]\}$ where, by definition, we can find a set $L_1 \subset [q]$ with $|L_1|=R^{m-1}(\ell)$ such that $\{v_{1,j} \;|\;  j \in L_1\}$ induces a monochromatic complete graph. In the $i$th step, consider the $2$-edge-colored complete graph induced by the vertices  $\{v_{i,j} \;|\;  j \in L_{(i-1)}\}$ where, by definition, we can find a set $L_i \subset L_{i-1}$ with $|L_i|=R^{m-i}(\ell)$ such that $\{v_{i,j} \;|\;  j \in L_i \}$ induces a monochromatic complete graph. At the $m$-th step, we get a set $\{v_{m,j} \;|\;  j \in L_m \}$ inducing a monochromatic complete graph for certain $L_m \subset L_{m-1}$ with $|L_m|=R^{0}(\ell)=\ell$. Set $L=L_m$ to conclude the proof.
\end{proof}

\begin{proof}[Proof of Theorem \ref{thm:linear-tK_2}]
Let $\ell, q$ be integers with $\ell^2 \ge \max\{ 2 z(\ell,t), 4r  z(\ell,\ell;s,t) \}$, $\ell \ge t$, and $q = Q(\ell,2)$, $n \ge R(\ell) + 4q$, and let $C $ be large enough such that $C n \ge \ex(n, qK_2)$, which is possible because $\ex(n, q K_2) = \Theta(n)$. 



Suppose we have a complete graph $K_n$ provided with a red-blue coloring $E(K_n) = E(R) \cup E(B)$ such that $\min \{ e(R), e(B)\} \ge C  n$ and such that it has no monochromatic induced $rK_2$ (this can happen for $r \ge 2$).  Then there is both, a red and a blue copy of $qK_2$.  Consider the set $S$ of vertices that do not belong to any of these copies.  Since $|S| \ge R(\ell)$, there is a monochromatic $\ell$-clique $S'$ disjoint from the red and the blue copies of $qK_2$. Without loss of generality, we can assume that $S'$ is a red clique. Now consider the blue copy of $qK_2$ and say it has edges $v_{1,j}v_{2,j}$,  $j \in [q]$.  Since $q \ge Q(\ell,2)$, there  is a set $L \subseteq [q]$ with $|L| = \ell$ such that, for each $i \in \{1,2\}$, the set $S_i = \{v_{i,j} \;|\; j \in L\}$ induces a monochromatic $K_{\ell}$.  If, for $i \in \{1,2\}$, $S_i$ induces a blue $K_{\ell}$, then we can consider the most represented color in $E(S',S_i)$, which has at least $\frac{{\ell}^2}{2} > z(\ell,\ell;t)$ edges.  Hence, it can be easily seen that  there is a monochromatic $K_{t,t}$ with one partition set inside $S$ and the other inside $S_i$. In either case, if this $K_{t,t}$ is red or blue, we obtain an induced monochromatic $S_{t,t}$. Thus, we may assume that both cliques $S_1$ and $S_2$ are red.
Define a graph $F$ with vertex set $V(F) = \{w_j \;|\; j \in L \}$ and edge set 
\[E(F) = \left\{ w_{j_1}w_{j_2} \; |\; \{v_{1,j_1}v_{2,j_2}, v_{1,j_2}v_{2,j_1} \right\} \cap E(B) \neq \emptyset\}.\]  
If $\alpha(F) \ge r$,  this  would imply,  by the construction of $F$, that there is a blue induced $rK_2$, which is not possible. Hence,  we can assume that $\alpha(F) < r$, and by Tur\'an's bound on the independence number \cite{Tur} we have
\[r > \alpha(F) \ge \frac{\ell}{{\rm d}(F)+1} = \frac{{\ell}^2}{2e(F)+\ell},\]
implying that $e(F) > \frac{{\ell}^2}{2r} - \frac{\ell}{2} \ge \frac{{\ell}^2}{4r}$.  By construction of the graph $F$, every edge $e \in E(F)$ corresponds to at least one blue edge $e' \in \{v_{1,j_1}v_{2,j_2}, v_{1,j_2}v_{2,j_1}\}$ for some pair $j_1, j_2 \in L$.  Moreover, since inside the sets $S_1$ and $S_2$ there are only red edges, we can derive
\[e_B(S_{1}, S_{2}) = \sum_{j_1, j_2 \in L} e_B(\{v_{1,j_1},v_{2,j_1}\}, \{v_{1,j_2},v_{2,j_2}\}) \ge e(F) > \frac{{\ell}^2}{4r} \ge z(\ell,\ell;s,t).\]
Hence, there is a blue $K_{s,t}$ with one partition set in $S_1$ and the other in $S_2$, i.e. an induced blue $K_{s,t}$.
\end{proof}

Regarding the sharpness of Theorems \ref{thm:linear-tK_2}, we have the following.

\begin{theorem}\label{thm:L_s,t-sharp}
Let $r, s, t$ and $n$ be  positive integers, where $t \ge s \ge 2$, and let $\mathcal{L}_{r,s,t} =  \{r K_2\} \cup \mathcal{F}_{s,t}$. Then $\ex_2(K_n, \mathcal{L}_{r,s,t}) = \Theta(n)$.
\end{theorem}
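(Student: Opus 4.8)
The plan is to combine the upper bound, which is essentially Theorem~\ref{thm:linear-tK_2}, with a single explicit extremal coloring for the lower bound. Throughout I assume $r \ge 2$: for $r = 1$ the pattern $rK_2 = K_2$ is just one monochromatic edge, forced by any coloring that uses both colors, so the quantity is not of order $n$ in that degenerate case; this also matches the hypothesis $r\ge 2$ of Theorem~\ref{thm:linear-tK_2}, on which the upper bound relies.

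For the upper bound $\ex_2(K_n, \mathcal L_{r,s,t}) = \mathcal O(n)$, I would let $C = C(r,s,t)$ be the constant from Theorem~\ref{thm:linear-tK_2} and take any $2$-edge-coloring $E(K_n) = E(R)\cup E(B)$ with $\min\{e(R),e(B)\}\ge Cn$. If this coloring contains a monochromatic induced $rK_2$ we are done, since $rK_2\in\mathcal L_{r,s,t}$. Otherwise the hypothesis of Theorem~\ref{thm:linear-tK_2} is met and it produces an induced monochromatic member of $\mathcal F_{s,t}\subseteq\mathcal L_{r,s,t}$. Either way there is an induced monochromatic member of $\mathcal L_{r,s,t}$, whence $\ex_2(K_n,\mathcal L_{r,s,t})\le Cn$.

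For the lower bound $\ex_2(K_n,\mathcal L_{r,s,t}) = \Omega(n)$, I would exhibit one coloring whose smaller color class has $\Omega(n)$ edges and which contains no induced monochromatic member of $\mathcal L_{r,s,t}$. The construction is the \emph{monochromatic star}: fix a vertex $c$, color blue the $n-1$ edges incident with $c$ and color red all remaining edges. Then $B\cong K_{1,n-1}$ and $R\cong K_{n-1}\cup K_1$, namely the red clique on the $n-1$ leaves together with the isolated vertex $c$. Here $e(B)=n-1=\Omega(n)$ is the smaller class (for $n\ge 4$), so it suffices to verify that neither color hosts an induced $rK_2$, $K_{s,t}$ or $S_{t,t}$. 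For the blue star all edges meet $c$, so its induced-matching number is $1<r$; its clique number is $2$, whereas $S_{t,t}$ has clique number $t+1\ge 3$, so no induced $S_{t,t}$ occurs; and every induced subgraph of $K_{1,n-1}$ is a star $K_{1,j}$ or an empty graph, while $K_{s,t}$ with $s,t\ge 2$ is neither (it has at least two vertices of degree $\ge 2$ and is nonempty), so no induced $K_{s,t}$ occurs. This is exactly where $s\ge 2$ is used. For the red graph $R=K_{n-1}\cup K_1$, the induced-matching number is again $1<r$; and both $K_{s,t}$ and $S_{t,t}$ (for $s,t\ge 2$, resp.\ $t\ge 2$) are connected but not complete, whereas every connected induced subgraph of $R$ lies inside the clique $K_{n-1}$ and is therefore complete, so neither pattern embeds. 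Hence the coloring avoids all of $\mathcal L_{r,s,t}$, giving $\ex_2(K_n,\mathcal L_{r,s,t})\ge n-1$, and combined with the upper bound $\ex_2(K_n,\mathcal L_{r,s,t})=\Theta(n)$.

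I expect the computation to be entirely routine; the only genuine subtlety, and the step I would flag as the crux, is choosing a construction that simultaneously avoids the pattern $S_{t,t}$ in \emph{both} colors. Naive candidates fail precisely here: taking blue to be a large clique or a large complete bipartite graph forces the red complement to be a split graph, which does contain an induced $S_{t,t}$ through a monochromatic $K_{t,t}$ between a red clique and a red-independent set, exactly the mechanism exploited in the proof of Theorem~\ref{thm:general}; and taking either color to be a disjoint union of several cliques creates large induced matchings. The star construction evades both pitfalls at once, because its complement is a single clique with one extra isolated vertex, which is too clique-like to host $S_{t,t}$ or $K_{s,t}$ and has induced-matching number $1$.
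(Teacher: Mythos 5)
Your proof is correct, and while the upper bound is exactly the paper's (an immediate appeal to Theorem~\ref{thm:linear-tK_2}), your lower-bound construction is genuinely different from the one in the paper. The paper partitions $V(K_n)$ into $w-1$ sets $V_1,\dots,V_{w-1}$ of size $t-1$ (with $w=\min\{r,s\}$) plus a large set $W$, colors red the edges inside $W$ and between distinct $V_i$'s, and blue everything else; this yields a coloring avoiding $\mathcal{L}_{r,s,t}$ whose smaller class has roughly $(\min\{r,s\}-1)(t-1)\,n$ edges, so it buys a better leading constant (and requires a somewhat fussier verification, e.g.\ that the red graph has induced matching number at most $2$). Your single blue star $K_{1,n-1}$ with red complement $K_{n-1}\cup K_1$ gives only $n-1$ edges in the smaller class, but the verification that neither color hosts an induced $rK_2$, $K_{s,t}$ or $S_{t,t}$ is essentially immediate, and $n-1$ is all that $\Theta(n)$ requires; your closing remark about why more naive candidates fail on $S_{t,t}$ is accurate and is indeed the point of the exercise. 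One further merit of your write-up: you explicitly flag that $r\ge 2$ is needed (for $r=1$ the quantity is $0$, not $\Theta(n)$), a hypothesis the paper's statement omits but which both its proof (where $w-1=0$ would leave the blue class empty) and yours implicitly rely on.
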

\begin{proof}
The upper bound is given by Theorem \ref{thm:linear-tK_2}. To see the lower bound, consider the following coloring of the edges of $K_n$.  Let $V(K_n)=V_1\cup  ... \cup V_{w-1}\cup W$ be a partition of $V(K_n)$ with $|V_i|=t-1$, for $1\leq i \leq w-1$, where $w = \min\{r,s\}$. Color red all edges having both end vertices in $W$ and all edges with one end vertex in $V_i$ and the other in $V_j$, for every $i\neq j$; all other edges are colored blue (see Figure \ref{fig:lineal_sharp}). It is not hard to check that such a coloring contains no induced monochromatic copy of $K_{s,t}$, $S_{t,t}$ or $rK_2$.  Moreover, this coloring has $(w-1)(t-1) (n - (w-1)(t-1)) + (w-1) \binom{t-1}{2} = \Theta(n)$ blue edges and at least as many red edges for $n$ large enough.
\end{proof}

\begin{figure}[h]
\begin{center}
	\includegraphics[scale=0.5]{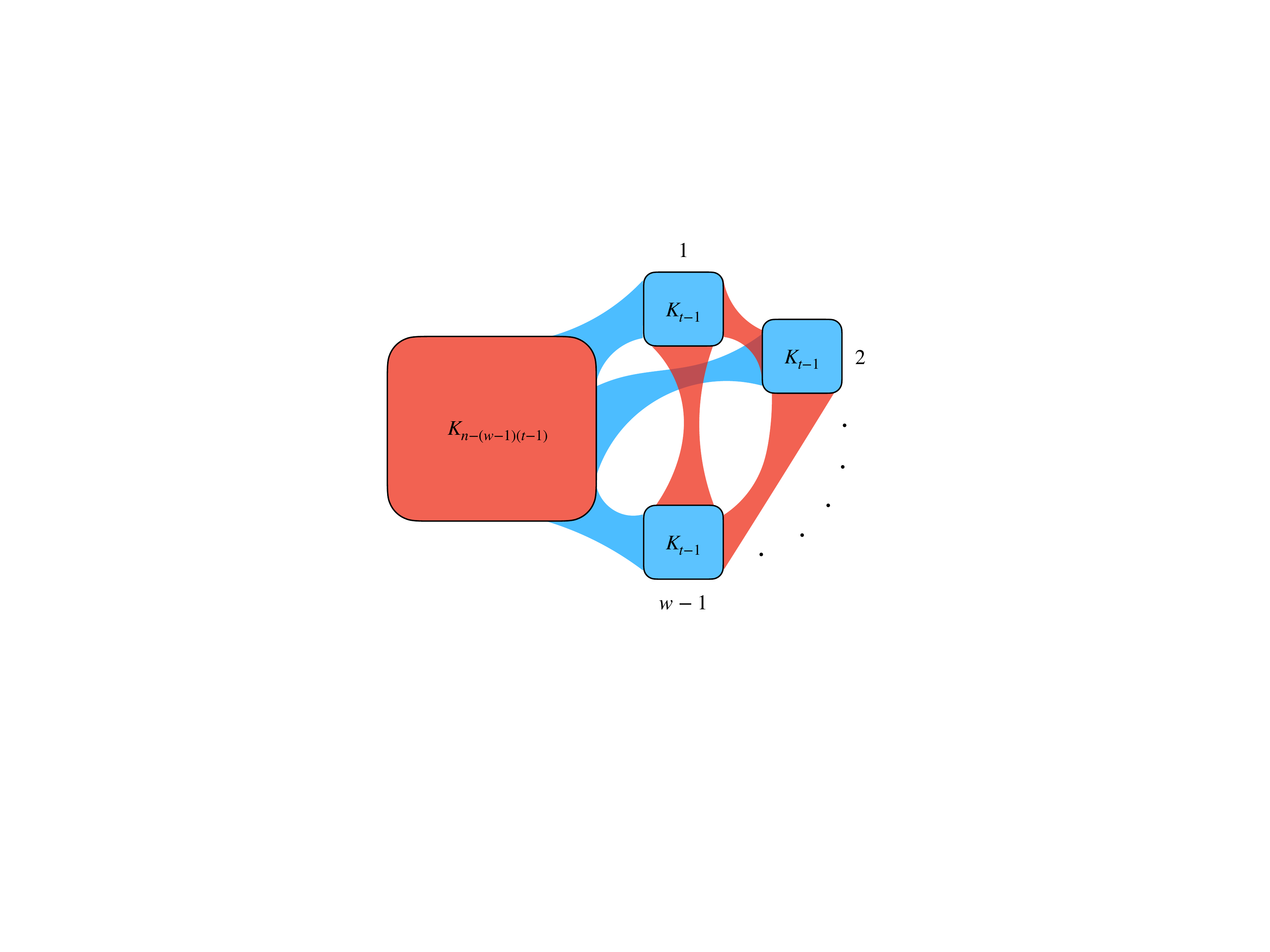}
	\caption{A $2$-edge coloring of $K_n$ with a linear amount of blue edges, but without induced monochromatic copies of  $K_{s,t}$, $S_{t,t}$, or $r K_2$, where $t \ge s \ge 2$, and $w = \min\{r,s\}$.}
		\label{fig:lineal_sharp}
	\end{center}
\end{figure}

\subsection{Constant amount of edges in each color}\label{subsec:constant}

We will finish this section studying the existence of  color patterns in $2$-edge-colorings of the complete graph under minimal restrictions.  More precisely, we give a non-trivial  answer to the following question: Does there exist a family of graphs of certain fixed order such that, for $n$ sufficiently large, the existence of an induced monochromatic  copy of one of them is forced in every $2$-edge-coloring of $K_n$ with at least a constant (not depending on $n$) number of edges in each color class?

We will make use of the \emph{strong chromatic index} $\chi'_s(G)$ of a graph $G$, which is the minimum number of colors that is required to color the edges of $G$ such that the vertices belonging to the edges of each color class induce a matching in $G$. It is well-known that a straightforward greedy argument gives 
\begin{equation}\label{eq:greedyX'_s}
\chi'_s(G) \le 2\Delta^2 - 2\Delta + 1,
\end{equation}
where $\Delta$ is the maximum degree of $G$. Erd\H{o}s and Ne\v{s}et\v{r}il conjectured that $\chi'_s(G) \le \frac{5}{4}\Delta^2$ (see \cite{FGST89}). In \cite{MoRe97}, it was shown that  $\chi'_s(G) \le 1.998 \Delta^2$ provided $\Delta$ is sufficiently large. The best current upper bound of this kind is $\chi'_s(G) \le 1.93 \Delta^2$ \cite{BrJo18}, where again $\Delta$ is assumed to be large. However, for our purposes in the next theorem the greedy bound (\ref{eq:greedyX'_s}) suffices. See also \cite{CFKP20} for related results on induced matchings.\\

\begin{theorem}\label{thm:constant}
For integers  $r, t \ge 2$, there is a constant $C = C(r,t)$ such that, for $n$ sufficiently large, every  $2$-edge-coloring of  $K_n$ with at least $C$ edges in each color class and without a monochromatic induced $rK_2$ contains an induced monochromatic $K_{1,t}$.
\end{theorem}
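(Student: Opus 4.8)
The plan is to show that, under the stated hypotheses, forbidding a monochromatic induced $rK_2$ in each color while keeping a constant number of edges in each color forces an induced monochromatic star $K_{1,t}$. The key structural constraint is that forbidding induced $rK_2$ in both colors means that \emph{neither} color class can contain a large induced matching. I would first argue that the constant $C$ can be chosen large enough (depending only on $r$ and $t$) to guarantee, by a Ramsey-type/pigeonhole argument, that within one color class there is a vertex of high enough degree to host a large star, and then use the forbidden-matching condition to upgrade this to an \emph{induced} star.

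Concretely, first I would fix the majority color, say blue, so that $e(B) \ge C$. The natural tool is the strong chromatic index: if the maximum degree $\Delta(B)$ were small, say $\Delta(B) \le D$, then by the greedy bound (\ref{eq:greedyX'_s}) the blue edges could be properly edge-colored with at most $2D^2 - 2D + 1$ strong colors, meaning $E(B)$ decomposes into at most $2D^2 - 2D + 1$ induced matchings. If additionally each such induced matching has fewer than $r$ edges (which is forced since a blue induced matching of size $r$ is exactly a forbidden blue induced $rK_2$), then the total number of blue edges would be bounded by $(r-1)(2D^2 - 2D + 1)$, a constant depending only on $r$ and $D$. Choosing $C$ strictly larger than this bound for $D := t-1$ yields a contradiction, so we may conclude that $\Delta(B) \ge t$, i.e. there is a blue vertex $v$ with at least $t$ blue neighbors.

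Having a blue vertex $v$ of blue-degree at least $t$ gives a (not necessarily induced) blue star $K_{1,t}$ centered at $v$; the remaining task—and the main obstacle—is to extract an \emph{induced} monochromatic star. The leaves of the blue star may have blue edges among themselves, spoiling inducedness. To handle this, I would look at the graph induced on the blue neighborhood $N_B(v)$ of $v$: if among these neighbors there are $t$ that are pairwise blue-nonadjacent, then $v$ together with those $t$ leaves induces a blue $K_{1,t}$ and we are done. Otherwise, the neighborhood contains many blue edges, and I would again invoke the forbidden induced matching together with the strong chromatic index (or Ramsey's theorem) to find a large blue clique among the neighbors, in which case that clique plus $v$ gives a larger blue clique, and switching focus to the \emph{red} edges incident to this clique—or a standard Ramsey argument on the neighborhood—produces the required induced star in one of the colors. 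The delicate point is bookkeeping the interplay between the blue and red structure so that the constant $C(r,t)$ can genuinely be chosen independently of $n$; I expect the cleanest route is to make $C$ large enough that, after fixing $v$, a blue neighborhood of size at least $R(t, r)$ (a suitable Ramsey number) is guaranteed, so that it contains either a blue independent set of size $t$ (giving the induced blue $K_{1,t}$) or a blue clique of size $r$, which would in turn force enough red edges to produce a red induced $K_{1,t}$ via the same degree-plus-matching argument applied to the red graph.
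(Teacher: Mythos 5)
Your first half is exactly the paper's argument: if $\Delta(B)$ is bounded, the greedy strong chromatic index bound (\ref{eq:greedyX'_s}) decomposes $E(B)$ into a bounded number of induced matchings, each of size at most $r-1$ by hypothesis, so choosing $C$ larger than $(r-1)(2D^2-2D+1)$ forces a vertex of large blue degree; and then applying Ramsey's theorem inside that blue neighborhood is also the right next move (a red clique of size $t$ there immediately yields an induced blue $K_{1,t}$, since in $K_n$ ``blue-independent'' means ``red clique''). But the endgame has a genuine gap, in two respects. First, the Ramsey number $R(t,r)$ is the wrong one: the bad outcome it leaves you with is a blue clique of size $r$, and $r$ can be as small as $2$, so this ``clique'' may be a single blue edge and carries no usable information. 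Second, and more importantly, your claim that a blue clique ``would in turn force enough red edges to produce a red induced $K_{1,t}$ via the same argument applied to the red graph'' does not close the case. The red graph has many edges by hypothesis (not because of the blue clique), and running the symmetric argument on it ends, in the bad case, with a \emph{red} clique in some red neighborhood. You are then left holding one red clique and one blue clique and no induced star, and nothing in your proposal resolves this configuration.

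The missing idea is the paper's final pigeonhole. Arrange (by taking the Ramsey threshold in the neighborhood to be $R(2t)$ rather than $R(t,r)$, and by extracting a monochromatic $K_{2t}$ from $K_n$ itself, which costs nothing since $n$ is large) to be left with a red clique $U$ and a blue clique $W'$, \emph{each of size $2t$}. The complete bipartite graph between them has $4t^2$ edges, so it is impossible that simultaneously every $u\in U$ has fewer than $t$ red neighbors in $W'$ and every $w\in W'$ has fewer than $t$ blue neighbors in $U$, as that would give $e(U,W')<2t^2+2t^2$. Hence some $u\in U$ has $t$ red neighbors inside the blue clique $W'$ (an induced red $K_{1,t}$, since all edges among those neighbors are blue), or some $w\in W'$ has $t$ blue neighbors inside the red clique $U$ (an induced blue $K_{1,t}$). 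Without this counting step, or a substitute for it, the case in which both neighborhood Ramsey applications return cliques of the ``wrong'' color is unhandled, so the proof as proposed is incomplete.
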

	
\begin{proof}
Set  $T=R(2t)$ and take $n$ large enough in order to verify the condition $\min \{e(B), e(R)\} \ge r(2T^2 -6T+5)$. That is, we need  $\binom{n}{2} \ge 2 r(2T^2 -6T+5)$, which holds if 
\begin{equation}\label{eq:hypothesis}
2n\ge 1+ \sqrt{1 + 16r(2T^2 -6T+5)}.
\end{equation} 
Since the right hand of inequality (\ref{eq:hypothesis}) is strictly grater than $2T = 2R(2t)$, there must be a monochromatic $K_{2t}$. We assume, without loss of generality, that there is a set of vertices $U$ such that $R[U] \cong K_{2t}$. Consider now the blue graph $B$ and suppose that $\Delta(B) < T$. Let $M$ be a maximum induced matching in $B$. Taking the largest chromatic class in a minimum strong edge-coloring of $B$, it is a simple fact to note that $|M| \ge \frac{e(B)}{\chi'_s(B)}$. Since the function $f(x) = 2x^2-2x+1$ is monotonically increasing for $x \ge 1$, and $\Delta(B) \le T-1$, it follows with (\ref{eq:greedyX'_s}) that
\[\chi'_s(B) \le 2\Delta(B)^2 - 2\Delta(B) + 1 \le 2 (T-1)^2 - 2(T-1) +1 = 2T^2 - 6T + 5.\] 
Hence, since $e(B) \ge r (2T^2 -6T+5)$, we obtain that
	\[|M| \ge \frac{e(B)}{\chi'_s(B)} \ge r,\]
implying that there is an induced blue $r K_2$, and we have finished. Thus, we may assume that $\Delta(B) \ge T$. Let $x$ be a vertex of maximum degree in $B$ and consider its set of neighbors $W = N_B(x)$. Since $|W| \ge T = R(2t)$, there is a monochromatic clique $W' \subset W$ of size $2t$. If $W'$ is a red clique,  then $t$ vertices from $W'$ together with $x$ provide an induced blue $K_{1,t}$, and we are done. Hence, we may assume that $W'$ is a blue clique. If $|N_R(u) \cap W'| < t$ for all vertices $u \in U$, and $|N_B(w) \cap U| < t$ for all vertices $w \in W'$, it follows that $e_R(U,W') < 2t^2$, as well as $e_B(W',U) < 2t^2$, which give the contradiction 
\[4t^2 = e(U,W') = e_R(U,W') + e_B(W',U) < 4 t^2,\]
Thus, there has to be a vertex $u \in U$ such that $|N_R(u) \cap W'| \ge t$ or a vertex $w \in W'$ such that $|N_B(u) \cap U| \ge t$. In the first case, $u$ together with $t$ vertices from  $N_R(u) \cap W'$ give an induced red $K_{1,t}$. 
In the second, $w$ together with $t$ vertices from $N_B(u) \cap U$ give an induced blue $K_{1,t}$. 
Hence, the statement follows with $C(r,t) = r(2T^2 -6T+5) = r(2(R(2t))^2 -6R(2t)+5)$.
	\end{proof}

As at least a constant number of edges is needed to have some induced monochromatic copy of any graph, Theorem \ref{thm:constant} yields the following corollary.

\begin{corollary}\label{coro:L_1,t-sharp}
Let $r, t$ and $n$ be  positive integers, where $t \ge 2$, and let $\mathcal{L}_{r,1,t} =  \{r K_2\} \cup \mathcal{F}_{1,t}$. Then $ \ex_2(K_n, \mathcal{L}_{r,1,t}) = \Theta(1)$.
\end{corollary}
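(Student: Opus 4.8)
The plan is to prove the corollary $\ex_2(K_n, \mathcal{L}_{r,1,t}) = \Theta(1)$ by establishing the upper and lower bounds separately, with the upper bound following almost immediately from Theorem~\ref{thm:constant} and the lower bound being essentially trivial. For the upper bound, recall that $\mathcal{L}_{r,1,t} = \{rK_2\} \cup \mathcal{F}_{1,t} = \{rK_2\} \cup \{K_{1,t}, S_{t,t}\}$. By definition of $\ex_2$, we must show that every $2$-edge-coloring of $K_n$ with more than a constant (depending only on $r,t$) number of edges in each color class contains an induced monochromatic member of $\mathcal{L}_{r,1,t}$. Theorem~\ref{thm:constant} delivers exactly this: for $n$ large enough and at least $C(r,t)$ edges in each color, either the coloring already contains an induced monochromatic $rK_2$ (in which case we are done immediately, as $rK_2 \in \mathcal{L}_{r,1,t}$), or it contains an induced monochromatic $K_{1,t}$. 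Since $K_{1,t} \in \mathcal{F}_{1,t} \subseteq \mathcal{L}_{r,1,t}$, the latter also yields a member of the forbidden family. Hence $\ex_2(K_n, \mathcal{L}_{r,1,t}) \le C(r,t) = \mathcal{O}(1)$.

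For the lower bound, the key observation is that \emph{any} induced monochromatic copy of a fixed graph $H$ requires at least $e(H)$ edges of a single color. Concretely, to contain an induced monochromatic $rK_2$, one needs at least $r$ edges of one color; to contain an induced monochromatic $K_{1,t}$, one needs at least $t$ edges of one color; and $S_{t,t}$ requires even more. Therefore, a coloring in which one color class has fewer than $\min\{r,t\}$ edges cannot contain \emph{any} induced monochromatic member of $\mathcal{L}_{r,1,t}$. For instance, color a single edge of $K_n$ red and all the rest blue: with only one red edge, there can be no induced monochromatic member of $\mathcal{L}_{r,1,t}$ in red (as $r,t \ge 2$), and the smallest color class has size $1$. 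This shows $\ex_2(K_n, \mathcal{L}_{r,1,t}) \ge \min\{r,t\} - 1 \ge 1$, so the parameter is bounded below by a positive constant. Combining both bounds gives $\ex_2(K_n, \mathcal{L}_{r,1,t}) = \Theta(1)$.

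I do not anticipate any genuine obstacle here, since the corollary is a direct packaging of Theorem~\ref{thm:constant} together with the elementary edge-counting lower bound; the phrase ``As at least a constant number of edges is needed to have some induced monochromatic copy of any graph'' in the surrounding text signals precisely this reasoning. The only point requiring a modicum of care is verifying that the upper bound argument correctly routes \emph{both} possible conclusions of Theorem~\ref{thm:constant} into membership in $\mathcal{L}_{r,1,t}$, which is immediate once one notes that $rK_2$ and $K_{1,t}$ are both listed in the family. The lower bound's only subtlety is ensuring the witnessing coloring indeed has each of the forbidden graphs absent; since every member of $\mathcal{L}_{r,1,t}$ has at least $\min\{r,t\} \ge 2$ edges, a color class of size below this threshold trivially cannot realize any of them. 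Thus the full proof reduces to these two short observations.
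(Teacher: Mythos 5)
Your upper bound is exactly the paper's argument: the paper derives Corollary \ref{coro:L_1,t-sharp} from Theorem \ref{thm:constant} in a single sentence, and your routing of both possible outcomes of that theorem (an induced monochromatic $rK_2$ or an induced monochromatic $K_{1,t}$) into membership in $\mathcal{L}_{r,1,t}$ is precisely the content of the $\mathcal{O}(1)$ bound.

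Your lower bound, however, contains a genuine error. The claim that a coloring whose smaller color class has fewer than $\min\{r,t\}$ edges ``cannot contain \emph{any} induced monochromatic member of $\mathcal{L}_{r,1,t}$'' only rules out a member inside the \emph{small} color class; an induced monochromatic copy may perfectly well live in the large one. Your concrete witness (one red edge $uv$, everything else blue) fails for $t=2$: the blue graph is $K_n$ minus the edge $uv$, so for any third vertex $w$ the set $\{u,v,w\}$ induces a blue $P_3\cong K_{1,2}$, and $\{u,v,w,w'\}$ induces a blue $K_4-e\cong S_{2,2}$. Indeed $\ex_2(K_n,\mathcal{L}_{r,1,2})\le \ex_2(K_n,\mathcal{F}_{1,2})=0$ by the paper's own remark after Theorem \ref{thm:st_sharp}, so the positive lower bound implicit in ``$\Theta(1)$'' is degenerate at $t=2$ (the paper's one-line justification is no more careful than yours on this point). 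For $t\ge 3$ and $r\ge 2$ your example does work, but the correct justification is not the blanket claim about small color classes: one must check that every induced subgraph of $K_n-uv$ is a clique or a clique minus one edge, and that no member of $\mathcal{L}_{r,1,t}$ is of that form, while the red graph induces at most one edge on any vertex set.
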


\subsection{Uniqueness of the patterns}\label{subsec:uniqueness}

As mentioned in the introduction, it was shown in \cite{CHM-K_4} that, for $n \equiv 0,1 \pmod 4$, there is a $2$-edge-coloring of $K_n$ in which one of the colors induces a $K_{p,n-p}$ or a $S_{q,n-q}$, where $p$ or $q$ can be chosen in such a way that each color has exactly $1/2$ the edges of $K_n$. Hence, we cannot hope for the unavoidability of $2$-colored patterns different from those contained in these two colorings, which are precisely those that have been shown to be unavoidable and that are given by means of an induced monochromatic bipartite graph or an induced monochromatic complete split graph. Looking deeper into Theorem \ref{thm:general}, one can see that, with an $\Omega(n^{2-\frac{1}{s}})$-amount of edges in each color, we can only guarantee the $2$-colored patterns given by an induced monochromatic member of $\{K_{s,T},S_{t,t}\}$ for $t$ and $T$ of our choice, and nothing else. This is because the unavoidable patterns have to be contained in one of the two balanced colorings described above, on the one side, and on the other, because Theorem \ref{thm:st_sharp} shows that the existence of the next possible such unavoidable pattern that contains a monochromatic $K_{s,T}$ or a monochromatic $S_{t,t}$, which would be $\{K_{s+1,T},S_{t,t}\}$, is only granted if we have an $\Omega(n^{2-\frac{1}{s+1}})$-amount of edges in each color. Thus, Theorem \ref{thm:general} and Corollary \ref{cor:star} are best possible in this sense, too. 

Similarly happens with Theorem \ref{thm:constant}. Consider the following two $2$-edge-colorings of $K_n$: the first one where one color induces a $K_{1,n-1}$, and the second where one color induces a $\lfloor \frac{n}{2} \rfloor K_2$. Clearly we have in both cases  a linear amount of edges in each color class,  and the only possible induced monochromatic subgraphs different from cliques (and without isolated vertices) are $K_{1,t}$ and  $rK_2$, where $t \le n-1$, and $r \le \frac{n}{2}$. Therefore, the $2$-colored patterns represented by $\{ K_{1,t}, rK_2\}$  are essentially the only ones that can be  guaranteed having an $\Omega(1)$-amount of edges in each color. 

By considering Theorems \ref{thm:linear-tK_2} and \ref{thm:L_s,t-sharp}, an analogous anaysis can be made in the setting where monochromatic induced matchings are forbidden.\\

\section{Extremal cases of balanceability}\label{sec:bal}
	
For a  graph $G$ with $e(G)$ edges, we say that a $2$-edge-coloring $f$ of $K_n$ contains a \emph{balanced} copy of $G$, if there is a copy of $G$ in $K_n$ such that $f$ assigns one of the colors, red or blue, to exactly $\lfloor e(G)/2\rfloor$ edges. In case $e(G) \equiv 0$ (mod $2$), a balanced copy of $G$ has  precisely $e(G)/2$ edges in each color class. Given a positive  integer $n$ and a graph $G$, we define the \emph{balancing number} ${\rm bal}(n,G)$ by means of ${\rm bal}(n,G) = \ex_2(K_n, \mathcal{F}(G))$, where $\mathcal{F}(G) = \{H \subseteq G \;|\;  \lfloor e(G) /2 \rfloor \le e(H) \le \lceil e(G) /2 \rceil\}$.

 If ${\rm bal}(n,G)$ exists for every sufficiently large $n$, we say that $G$ is \emph{balanceable}. 

The following characterization of balanceable graphs is given in \cite{CHM_unav}.

\begin{theorem}[\cite{CHM_unav}]\label{thm:char-BP}
A graph $G$ is balanceable if and only if $G$ has both a partition $V(G)=X \cup Y$ and a set of vertices $W\subseteq V(G)$ such that
\[e(X,Y), e(W) \in \left\{\left\lfloor \frac{e(G)}{2} \right\rfloor, \left\lceil \frac{e(G)}{2} \right\rceil\right\}.\] 
Moreover, if $G$ is balanceable, ${\rm bal}(n,G) = \mathcal{O}(n^{2-\frac{1}{m}})$, where $m = m(G)$ depends only on $G$.
\end{theorem}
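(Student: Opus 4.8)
The plan is to prove both the characterization and the quantitative bound by setting up a dictionary between the two structural conditions on $G$ and the two extremal colorings of \cite{CHM-K_4}: the complete bipartite coloring, where one color induces $K_{p,n-p}$, and the complete split coloring, where one color induces $S_{q,n-q}$. The elementary observation driving everything is that, for a vertex set $U$ with $|U|=n(G)$, an induced monochromatic copy $R[U]\cong H$ with $H\in\mathcal F(G)$ immediately yields a balanced copy of $G$: since $H\subseteq G$, one embeds $G$ on $U$ along the isomorphism so that the red edges of $G$ are exactly $E(H)$ while all remaining edges of $G$ fall on blue pairs; as $e(H)\in\{\lfloor e(G)/2\rfloor,\lceil e(G)/2\rceil\}$, this copy is balanced. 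Contrapositively, a coloring with no balanced copy of $G$ has no induced monochromatic member of $\mathcal F(G)$, so exhibiting the former certifies $\ex_2(K_n,\mathcal F(G))=\infty$, i.e. non-balanceability.

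For necessity I would argue by contraposition. Suppose first that $G$ admits no partition $V(G)=X\cup Y$ with $e(X,Y)$ balanced. Take the coloring where red induces $K_{p,n-p}$, choosing $p$ as in \cite{CHM-K_4} so that the two color classes have exactly half the edges each, which is possible for the infinitely many $n\equiv 0,1\pmod 4$. In any embedding of $G$ into this coloring, every vertex of $G$ lands on one of the two sides, and an edge of $G$ is red precisely when its endpoints straddle the sides; hence the red edges of $G$ form the cut $E(X,Y)$ of the induced bipartition, which by assumption is never balanced. Thus no balanced copy of $G$ exists, for infinitely many $n$, so $G$ is not balanceable. The case where $G$ has no set $W$ with $e(W)$ balanced is symmetric: in the coloring where one color induces $S_{q,n-q}$, the red edges of any embedded $G$ equal $e(G[W])$ for $W$ the preimage of the independent part of the split graph, again never balanced.

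For sufficiency together with the bound, fix $t=n(G)$ (or, more sparingly, the maximum of $|X|,|Y|,|W|,|V(G)\setminus W|$ over the witnessing partition and set) and put $m=t$. By (\ref{eq:GiNa}) we have $\ex_2(K_n,\mathcal F_t)=\mathcal O(n^{2-1/t})$, so once $\min\{e(R),e(B)\}>Cn^{2-1/t}$ there is an induced monochromatic $K_{t,t}$ or an induced monochromatic $S_{t,t}$. If we land in a (say red) $K_{t,t}$, its $2t$ vertices carry a local copy of the bipartite coloring: I place $X$ on one side and $Y$ on the other, so the cut edges $E(X,Y)$ of $G$ become red and the within-part edges become blue, producing a balanced copy. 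If instead we land in an $S_{t,t}$, its vertices carry a local copy of the split coloring: I place $W$ in the part that is a red clique and $V(G)\setminus W$ in the part that is a blue clique, so the red edges of $G$ are exactly $e(G[W])$, again balanced. Since a balanced cut (resp. balanced induced subgraph) stays balanced under swapping red and blue, the mirror cases with the pattern in the opposite color are handled identically. Hence $\bal(n,G)=\ex_2(K_n,\mathcal F(G))=\mathcal O(n^{2-1/m})$ with $m=m(G)$ depending only on $G$.

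The step I expect to be the crux is the coordination between the two directions: the characterization must invoke \emph{both} structural conditions precisely because (\ref{eq:GiNa}) guarantees only \emph{one} of the two patterns $K_{t,t}$ or $S_{t,t}$, with no control over which one appears, so each condition is exactly what rescues one of the two possible outcomes. The remaining care is bookkeeping: verifying that the extremal colorings of \cite{CHM-K_4} realize exactly-balanced color classes for infinitely many $n$ (so that $\ex_2=\infty$ genuinely follows), and checking that in each local embedding only the edges of $G$ matter, while the non-edges of $G$ may be sent to arbitrary pairs without affecting the red/blue count. These are routine once the dictionary above is in place.
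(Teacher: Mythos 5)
The paper does not prove this statement: it is quoted verbatim from \cite{CHM_unav}, and the only in-text justification is the remark that the quantitative bound follows from (\ref{eq:GiNa}) by the same argument. Your overall strategy --- the dictionary between the two structural conditions and the two patterns $K_{t,t}$, $S_{t,t}$, the contrapositive via the exactly-balanced extremal colorings of \cite{CHM-K_4}, and the reduction of the bound to $\ex_2(K_n,\mathcal F_t)=\mathcal O(n^{2-1/t})$ with $t=n(G)$ --- is exactly the standard route and matches what the cited source does.

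There is, however, a concrete error in your sufficiency step for the split pattern. Recall that $S_{t,t}$ \emph{contains} all $t^2$ edges between the clique part and the independent part. So if the red graph induces an $S_{t,t}$ on $U=A\cup B$ with $A$ the clique part, the local coloring is: pairs inside $A$ red, pairs inside $B$ blue, and \emph{all cross pairs red}. Your recipe ``place $W$ in the part that is a red clique, so the red edges of $G$ are exactly $e(G[W])$'' is therefore false: with $W\mapsto A$ the red edges of the copy number $e(W)+e(W,V(G)\setminus W)=e(G)-e(G[V(G)\setminus W])$, which is balanced only if $e(G[V(G)\setminus W])$ is --- not what condition (ii) provides. (Take $G=K_{1,4}$ and $W$ the center plus two leaves: $e(W)=2$ is balanced, but your placement yields $4$ red edges and $0$ blue.) The fix is to place $W$ in the \emph{independent} part of the split graph, whichever color the pattern has: then the edges inside $W$ receive the minority color and everything else the majority color, so one color class of the copy has exactly $e(G[W])$ edges and the copy is balanced. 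Note also that your closing claim that the two color cases ``are handled identically'' by swapping red and blue is not quite right here: the correct placement of $W$ flips between the red-clique side and the blue-clique side depending on which color carries the $S_{t,t}$. The analogous mislabeling in your necessity argument (red edges of an embedded $G$ equal $e(G[W])$ for $W$ the preimage of the independent part --- it is the blue edges that do) is harmless there, since balancedness is color-symmetric, but in the sufficiency direction the orientation matters and must be stated correctly.
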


Observe that Theorem \ref{thm:char-BP} states that ${\rm bal}(n,G)$ is subquadratic with respect to $n$. By (\ref{eq:GiNa}), it follows more precisely (by the same arguments that are used for the proof Theorem \ref{thm:char-BP}) that ${\rm bal}(n,G) = \mathcal{O}(n^{2-\frac{1}{t}})$, where $t$ is the order of $G$.

\subsection{Graphs with large balancing number}

 By means of Theorem \ref{thm:char-BP} and the construction given in \cite{ARS, KRS} mentioned previously in this paper (see discussion before Theorem \ref{thm:st_sharp}), we can prove that balanceable graphs with large edge number (close to quadratic) have balancing number $\Omega(n^{2-\frac{1}{s}})$.

\begin{theorem}\label{thm:large_bal}
Let $G_t$ be a balanceable graph of order $n(t)$ and edge number $e(t)$, where $t \in \mathbb{N}$ and $n(x)$ and $e(x)$ are both monotonically increasing functions with $x$, $x >0$.  If $n(t)^{2-\frac{1}{s}} = o(e(t))$ for some integer $s \ge 2$, then $\bal(n,G_t) = \Omega(n^{2-\frac{1}{s}})$.
\end{theorem}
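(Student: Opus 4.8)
The plan is to prove the lower bound by a direct construction. Since $\bal(n,G_t)=\ex_2(K_n,\mathcal{F}(G_t))$ by definition, it suffices to exhibit, for every sufficiently large $n$, a $2$-edge-coloring of $K_n$ whose minority color class has size $\Omega(n^{2-\frac{1}{s}})$ and which contains no induced monochromatic member of $\mathcal{F}(G_t)$. I would let the blue graph $B$ be a $K_{s,s'}$-free graph on $n$ vertices with $\Omega(n^{2-\frac{1}{s}})$ edges, where $s'=(s-1)!+1\ge s$, as furnished by the Alon--R\'onyai--Szab\'o / Koll\'ar--R\'onyai--Szab\'o construction \cite{ARS,KRS} (the same one discussed before Theorem \ref{thm:st_sharp}), and color all remaining edges red. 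As $2-\frac{1}{s}<2$, the blue class is the minority one, of size $\Omega(n^{2-\frac{1}{s}})$.

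The core is a crude count of blue edges. Fix any $U\subseteq V(K_n)$ with $|U|=n(t)$. Since $B[U]$ is a subgraph of the $K_{s,s'}$-free graph $B$, the K\H{o}vari--S\'os--Tur\'an bound (\ref{eq:exst}) gives $e(B[U])\le \ex(n(t),K_{s,s'})\le C'\,n(t)^{2-\frac{1}{s}}$ for a constant $C'=C'(s,s')$. Suppose now that $U$ carried an induced monochromatic copy of some $H\in\mathcal{F}(G_t)$. If the copy is blue, then $e(B[U])=e(H)\ge\lfloor e(t)/2\rfloor$; if it is red, then $e(B[U])=\binom{n(t)}{2}-e(H)\ge\binom{n(t)}{2}-\lceil e(t)/2\rceil\ge e(t)-\lceil e(t)/2\rceil=\lfloor e(t)/2\rfloor$, using $e(t)\le\binom{n(t)}{2}$. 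Hence in both cases $e(B[U])\ge\lfloor e(t)/2\rfloor$, so it is enough to guarantee $\lfloor e(t)/2\rfloor>C'\,n(t)^{2-\frac{1}{s}}$; this is precisely what the hypothesis $n(t)^{2-\frac{1}{s}}=o(e(t))$ delivers once $t$ is large enough.

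Combining the two observations, for every sufficiently large $t$ the coloring above has minority class of size $\Omega(n^{2-\frac{1}{s}})$ and contains no induced monochromatic member of $\mathcal{F}(G_t)$, whence $\bal(n,G_t)=\ex_2(K_n,\mathcal{F}(G_t))=\Omega(n^{2-\frac{1}{s}})$. The steps I expect to require the most care are the bookkeeping ones: confirming that $s'=(s-1)!+1\ge s$ so that both the construction of \cite{ARS,KRS} applies and the exponent in (\ref{eq:exst}) equals $2-\frac{1}{s}$, and turning the asymptotic hypothesis into the explicit threshold $\lfloor e(t)/2\rfloor>C'n(t)^{2-\frac{1}{s}}$ with the right constant. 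It is worth remarking that balanceability of $G_t$ is never used in the construction itself; it only ensures that $\bal(n,G_t)$ is finite (indeed $\mathcal{O}(n^{2-1/m})$ by Theorem \ref{thm:char-BP}), which is what makes the lower bound meaningful.
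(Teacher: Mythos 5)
Your proof is correct and essentially identical to the paper's: both take the blue class to be the $K_{s,t'}$-free Alon--R\'onyai--Szab\'o/Koll\'ar--R\'onyai--Szab\'o graph with $\Omega(n^{2-\frac{1}{s}})$ edges and derive the contradiction from the K\H{o}vari--S\'os--Tur\'an bound applied to the at least $\lfloor e(t)/2\rfloor$ blue edges that any balanced copy of $G_t$ would have to carry on its $n(t)$ vertices, choosing $t$ large enough that $\lfloor e(t)/2\rfloor$ exceeds $\ex(n(t),K_{s,t'})$. The only divergence is that the paper argues directly with balanced copies, so your ``red case'' is superfluous (a balanced copy has at least $\lfloor e(t)/2\rfloor$ edges of \emph{each} colour, so bounding the blue ones already suffices) --- which is just as well, since that step silently assumes the members of $\mathcal{F}(G_t)$ are spanning subgraphs of $G_t$; otherwise $|U|=n(H)$ may be smaller than $n(t)$ and $\binom{|U|}{2}-e(H)$ need not be large.
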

\begin{proof}
Let $r$ and $s$ be integers with $r \ge (s-1)!+1$. By (\ref{eq:exst}) and the result in \cite{ARS, KRS} mentioned above, there are constants $c = c(r,s)$ and $C = C(r,s)$ such that $c (n(t))^{2-\frac{1}{s}} \le \ex(n(t),K_{r,s}) \le C (n(t))^{2-\frac{1}{s}}$, for large enough $t$. Also, we know that, since $n(t)$ and $e(t)$ are both monotonically increasing and $n(t)^{2-\frac{1}{s}} = o(e(t))$, we can select $t$ large enough to verify $\left\lfloor \frac{e(t)}{2} \right\rfloor > C (n(t))^{2-\frac{1}{s}}$. We take now $n \ge n(t)$ such that $\bal(n,G_t)$ exists. Let $G$ be a $K_{r,s}$-free graph on $n$ vertices and $c n^{2-\frac{1}{s}}$ edges. Define a $2$-edge coloring $E(K_n) = E(R) \cup E(B)$ of $K_n$ such that $B \cong G$. Assume to the contrary that there is a copy $H \subset K_n$ of $G_t$ which is balanced. Then
\[ e_B(H) \ge \left\lfloor \frac{e(t)}{2} \right\rfloor > C (n(t))^{2-\frac{1}{s}},\]
implying that the graph induced by the blue edges in $H$ contains a $K_{r,s}$, a contradiction to the assumption that $B$ is $K_{r,s}$-free. Thus the given edge coloring of $K_n$ does not contain a balanced $G_t$ and we can conclude that $\bal(n, G_t) \ge e(G) \ge C n^{2-\frac{1}{s}}$. Hence, it follows that ${\rm bal}(n,G_t) = \Omega(n^{2-\frac{1}{s}})$.
\end{proof}
Theorem \ref{thm:large_bal} shows that there are graphs with very large balancing number.  We will give some examples for which this theorem can be applied.

\begin{example}\label{ex:big_bal}
The following graphs are balanceable and have large balancing number for large $t$.
\begin{enumerate}
    \item[(a)] The complete bipartite graph $K_{t,t}$.
    \item[(b)] The graph $2K_t$, where $t$ is the sum of two square natural numbers.
    \item[(c)] The graph $H_t$ with $V(H_t) = A \cup B$ such that $A = \{v_1, v_2, \ldots, v_t\}$ and $B = \{v_{t+1}, v_{t+2}, \ldots, v_{2t}\}$, where $B$ is a clique, $A$ is an independent set and adjacencies between $A$ and $B$ are given by $v_i v_{t+j}\in E(H_t)$ if and only if $j \le i$, where $1\le i, j\le t$.
    \item[(d)] The graph $E_t$ with $V(E_t) = A \cup B$ such that $A = \{v_1, v_2, \ldots, v_t\}$ and $B = \{v_{t+1}, v_{t+2}, \ldots, v_{2t}\}$, where $A$ and $B$ are independent sets and adjacencies between $A$ and $B$ are given by $v_i v_{t+j}\in E(H_t)$ if and only if $j \le i$, where $1\le i, j\le t$.
\end{enumerate}
\end{example}

\begin{proof}
Observe that, with $e(K_{t,t}) = t^2$, $e(2K_t) = t(t-1)$, $e(H_t) = t^2$, $e(E_t) = \frac{1}{2}t(t+1)$, the number of edges in each of the examples is quadratic in $t$. Moreover, $n(K_{t,t}) = n(2K_t) = n(H_t) = n(E_t) = 2t$ is linear in $t$. Hence, in all cases $n(t) ^{2-\frac{1}{s}} = o(e(t))$ for any $s \ge 2$. Since all functions are monotonically increasing with $t$, by Theorem \ref{thm:large_bal} we have $\bal(n,G_t) = \Omega(n^{2-\frac{1}{s}})$ for some $s \ge 2$ and any $G_t \in \{K_{t,t}, 2K_t, H_t, E_t\}$ whenever the four graphs are balanceable. Indeed, $2K_t$ is balanceable if and only if $t$ is the sum of two square natural numbers \cite{DHV_Kn} (see \cite{Ven23}), $H_t$ and $E_t$ are always balanceable because they are global amoebas \cite{CHM_amo, EHMV_Fibo} and global amoebas are balanceable as shown in \cite{CHM_unav}. It remains to show that $K_{t,t}$ is balanceable, as we prove below. 

Let $H \cong K_{t,t}$ be a complete bipartite graph with bipartition $V(H) = A \cup B$. Let $A = A_1 \cup A_2$, and $B = B_1 \cup B_2$ be partitions of $A$ and $B$ such that $|A_1|  = |B_1| = \lfloor \frac{t}{2} \rfloor$. Then, for the sets $X = A_1 \cup B_1$ and $Y = A_2 \cup B_2$, we have 
\[e(X,Y) = |A_1||B_2| + |A_2||B_1| = 2 \left\lfloor \frac{t}{2} \right\rfloor \left\lceil \frac{t}{2} \right\rceil = \left\lfloor \frac{t^2}{2} \right\rfloor = \left\lfloor \frac{e(K_{t,t})}{2} \right\rfloor.\]
On the other hand, let $A' \subseteq A$ such that $|A'| = t$, if $t$ is even, and $|A'| = t-1$, if $t$ is odd, and let $B' \subset B$ with $|B'| = \lceil \frac{t}{2}\rceil$. Define $W = A' \cup B'$. Then
\[e(W) = |A'||B'| = \left\lfloor \frac{t^2}{2} \right\rfloor = \left\lfloor \frac{e(K_{t,t})}{2} \right\rfloor.\]
Hence, $H \cong K_{t,t}$ fulfills the conditions of Theorem \ref{thm:char-BP} and is therefore balanceable.
\end{proof}

\subsection{Graphs with constant balancing number}

In contrast to complete bipartite graphs and the other examples given in Example \ref{ex:big_bal} that have a very large balancing number, there are other graphs where the balancing number has a much smaller order of magnitude. In \cite{CHM_unav}, it is shown that the balancing number grows linearly with $n$ when $G$ is a path or a star, and the upper bound $(k-1)n$ holds in general for any tree on $k$ edges. In this section, we will show that there is an infinite family of graphs with constant balancing number, that is, there are graphs $G$ for which  ${\rm bal}(n,G) = c(G)$ for all sufficiently large $n$, where $c(G)$ is a constant depending only on $G$. Moreover, we will characterize their structure. We will make use of the \emph{matching number} $\beta(G)$ of a graph $G$, that is, the maximum cardinality of a set of pairwise independent (or non-incident) edges in $G$. 
For a positive even integer $k$, define $\mathcal{C}_k$ to be the family of graphs with 
\begin{itemize}
\item[(i)] $e(G)=k$, 
\item[(ii)] $\beta(G) \ge \frac{k}{2}$, and
\item[(iii)] a vertex $x$ with $\deg_G(x)=\frac{k}{2}$.
\end{itemize}
Before continuing, we shall note that the graphs contained in $\mathcal{C}_k$ have a rather special structure.
\begin{lemma}\label{lem:structure}
Let $G\in \mathcal{C}_k$. Then the following properties hold.
\begin{itemize}
\item[(a)] $\beta(G) \le \frac{k}{2}+1$;
\item[(b)] There is an edge $e \in E(G)$ such that $G - e$ can be decomposed into a star $K_{1, \frac{k}{2}}$ and the graph $(\frac{k}{2}-1)K_2$.
\item[(c)] For every $v\in V(G)$, either $\deg_G(v) \le 3$ or $\deg_G(v)=  \frac{k}{2}$;
\item[(d)] Let $x$ be a vertex of degree $\frac{k}{2}$ in $G$. Then there is a maximum matching containing an edge incident with $x$.
\end{itemize}
\end{lemma}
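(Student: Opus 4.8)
The plan is to exploit the tight edge count. Write $x$ for the vertex with $\deg_G(x)=\frac{k}{2}$ guaranteed by (iii). Since $G$ is simple and $e(G)=k$, exactly $\frac{k}{2}$ edges are incident with $x$ (forming a star $K_{1,\frac{k}{2}}$) and exactly $k-\frac{k}{2}=\frac{k}{2}$ edges avoid $x$; these latter edges are precisely the edges of $G':=G-x$, so $e(G')=\frac{k}{2}$. Part (a) is then immediate: any matching uses at most one edge incident with $x$, and all its remaining edges lie among the $\frac{k}{2}$ edges of $G'$, so $\beta(G)\le \frac{k}{2}+1$.

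For part (b), which I expect to be the crux, I would start from a maximum matching $M$ of $G$, which satisfies $|M|\ge\frac{k}{2}$ by (ii). At most one edge of $M$ meets $x$, so the edges of $M$ avoiding $x$ form a matching $M'\subseteq E(G')$ with $|M'|\ge\frac{k}{2}-1$. Because $e(G')=\frac{k}{2}$, there are only two possibilities: either $|M'|=\frac{k}{2}$, in which case $M'=E(G')$ and the $\frac{k}{2}$ edges of $G'$ already form a matching, or $|M'|=\frac{k}{2}-1$, in which case exactly one edge $f$ of $G'$ lies outside $M'$. In the first case I delete any edge of $G'$, in the second I delete $e=f$; either way the edge set of $G-e$ is the full star at $x$ together with a matching of size $\frac{k}{2}-1$. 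These two parts are edge-disjoint (star edges meet $x$, the matching edges do not), so $E(G-e)$ decomposes into $K_{1,\frac{k}{2}}$ and $(\frac{k}{2}-1)K_2$, as required. The only delicate point is keeping the two matching-size cases straight; the near-perfect matching structure of $G'$ is what the hypotheses are really buying.

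Part (c) I would deduce directly from (b). In $G-e$ the vertex $x$ has degree $\frac{k}{2}$, while any other vertex lies on at most one star edge and at most one matching edge, hence has degree at most $2$. Crucially, the deleted edge $e$ lies in $E(G')$ and is therefore not incident with $x$; restoring it raises the degrees of its two endpoints by one and leaves $\deg_G(x)=\frac{k}{2}$ unchanged. Consequently every vertex $v\ne x$ satisfies $\deg_G(v)\le 3$ while $\deg_G(x)=\frac{k}{2}$, which is exactly the claimed dichotomy: any vertex of degree exceeding $3$ must be $x$ and so has degree $\frac{k}{2}$.

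Finally, part (d) is a general statement about maximum matchings, and I would prove it by an augmenting swap, independently of the other parts. Given a maximum matching $M$, if it already covers $x$ we are done; otherwise $x$ is exposed. Pick a neighbour $y$ of $x$ (one exists since $\deg_G(x)=\frac{k}{2}\ge 1$). Maximality of $M$ forces $y$ to be matched, say by $yy'\in M$, for otherwise $M\cup\{xy\}$ would be larger. Then $(M\setminus\{yy'\})\cup\{xy\}$ is again a matching of the same size, hence maximum, and it contains the edge $xy$ incident with $x$. This completes the plan, with the structural analysis in (b) being the only substantial step and (c) resting on it.
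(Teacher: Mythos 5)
Your proposal is correct and follows essentially the same route as the paper: (a) via the at-most-one-matching-edge-at-$x$ count, (b) by observing that the $\frac{k}{2}$ edges avoiding $x$ contain a matching of size at least $\frac{k}{2}-1$ and deleting the at most one leftover edge, (c) by reading degrees off that decomposition, and (d) by the standard swap $(M\setminus\{yy'\})\cup\{xy\}$. Your write-up of (b) is just a more explicit case split of the paper's one-line argument.
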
 
\begin{proof}
(a) Since $G$ has $k$ edges and  a vertex $x$ of degree $\frac{k}{2}$, at most one of the edges incident with $x$ belongs to a maximum matching. Hence, $\beta(G) \le e(G) - \left( \frac{k}{2}-1 \right) = \frac{k}{2}+1$. \\
(b) The property $\beta (G)\geq \frac{k}{2}$ yields that, among the $\frac{k}{2}$ edges of $G$ not incident with $x$, at least $\frac{k}{2}-1$ belong to a matching. Hence, we can conclude that there is an edge $e \in E(G)$ such that $G - e$ can be decomposed into a star $K_{1, \frac{k}{2}}$ and the graph $(\frac{k}{2}-1)K_2$.\\
(c) By the decomposition mentioned in (b), it is clear that $G-e$ contains the vertex $x$ with degree $\frac{k}{2}$ and all remaining vertices have degree at most $2$. Hence, in $G$, we have that either $\deg_G(v) \le 3$, or $\deg_G(v)=  \frac{k}{2}$ for all $v\in V(G)$.\\
(d) Let $M$ be a maximum matching in $G$. Suppose $M$ has no edge incident with $x$. Then there is an edge $yz \in M$ where $\{y, z\} \cap N(x) \neq \emptyset$, otherwise we can take any $v \in N(x)$ and $M \cup \{xv\}$ would be a matching larger than $M$, a contradiction. So we may assume that $y \in N(x)$ and now we can take the maximum matching $M' = (M \setminus \{yz\}) \cup \{ xy \}$ which contains  an edge incident with $x$.
\end{proof}

We show in the next lemma that, for even $k$, the family $\mathcal{C}_k$ contains precisely those graphs on $k$ edges that have constant balancing number.

\begin{lemma}\label{lem:bal_const_odd_k}
Let $k \ge 2$ be an even integer, and let $G$ be a graph with $e(G)=k$. Then, there is a constant $c(G)>0$ such that, for every large enough $n$,  ${\rm bal}(n,G) = c(G)$  if and only if $G\in \mathcal{C}_k$.
\end{lemma}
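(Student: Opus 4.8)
The plan is to work throughout with the direct meaning of the balancing number: since $k=e(G)$ is even, a copy of $G$ is \emph{balanced} exactly when precisely $k/2$ of its edges receive each color, and $\mathrm{bal}(n,G)$ is the least $m$ such that every coloring with more than $m$ edges in each color contains such a copy. I would prove the equivalence by a direct embedding in one direction and the contrapositive in the other. For \emph{necessity}, suppose $G\notin\mathcal C_k$, so that either $\beta(G)<k/2$ or $G$ has no vertex of degree exactly $k/2$; in each case I would exhibit colorings with an unbounded minority color and no balanced copy. If $\beta(G)<k/2$, color a near-perfect matching of $K_n$ blue and the rest red: the blue edges of any copy of $G$ form a matching of $G$, so at most $\beta(G)<k/2$ of them can be blue and no copy is balanced, while the blue class has $\lfloor n/2\rfloor$ edges. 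If no vertex has degree $k/2$, color a spanning star blue and the rest red: a copy meeting the star center $z$ has exactly $\deg_G(\cdot)$ blue edges and a copy avoiding $z$ is entirely red, so a balanced copy would force a vertex of degree $k/2$; here the blue class has $n-1$ edges. In both cases $\mathrm{bal}(n,G)\to\infty$, so it is not eventually constant.

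For \emph{sufficiency} I would fix a constant $C=C(k)$ and show that any coloring of $K_n$ with at least $C$ edges in each color admits a balanced copy, via two embedding schemes. In \emph{star mode}, a monochromatic clique $W$ with $|W|\ge n(G)$ together with a vertex $u\notin W$ having at least $k/2$ edges of the opposite color into $W$ suffices: send the degree-$\tfrac k2$ vertex $x$ of $G$ to $u$, place $N_G(x)$ on $k/2$ opposite-colored neighbours of $u$ in $W$, and embed the $k/2$-edge graph $G-x$ arbitrarily into the clique $W$. In \emph{matching mode}, which I would use when the minority color has bounded maximum degree (so its complement is nearly complete), I map a matching $M_G$ of $G$ of size $k/2$ (which exists as $\beta(G)\ge k/2$) onto $k/2$ disjoint minority edges and embed the remaining $k/2$ edges of $G$ greedily into the dense majority color, avoiding the few minority edges. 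Both schemes produce exactly $k/2$ edges of each color, and Lemma \ref{lem:structure} supplies the structural flexibility of $G$ (that, up to one edge, it is a star plus a near-perfect matching) that these embeddings exploit.

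The crux is to guarantee one of these two configurations using only constantly many edges per color. By Ramsey's theorem there is a monochromatic clique $W_0$, say red, of size as large as desired. If some vertex has at least $k/2$ blue edges into $W_0$, star mode applies at once. Otherwise $W_0$ is \emph{red-dominant} (every vertex has at most $\tfrac k2-1$ blue edges to $W_0$), and I would split on the blue graph, which has at least $C$ edges: if it contains a large blue star (by $\mathrm{ex}(n,\{K_{1,t},rK_2\})=\mathcal O(1)$ \cite{ABHS,ErRa}), a Ramsey argument inside its leaves (most of which lie outside $W_0$) yields either a red clique with the star center as a blue apex, or a blue clique outside $W_0$ for which red-dominance and a counting argument supply a red apex inside $W_0$; either way star mode applies. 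If instead the blue graph has bounded maximum degree, its complement (the red graph) is nearly complete and matching mode applies. I expect the main difficulty to be exactly this coordination step: ensuring that the monochromatic clique obtained from Ramsey and the opposite-color structure obtained from the $\mathcal O(1)$-bound can be aligned on a common vertex set, which is where red-dominance and the degree bound on the minority color do the work of letting the non-star (respectively non-matching) edges of $G$ fall into the correct color.

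Finally, sufficiency gives $\mathrm{bal}(n,G)\le C(k)$, while the trivial inequality $\mathrm{bal}(n,G)\ge\tfrac k2-1$ (a minority color with fewer than $k/2$ edges cannot fill half of any copy) yields positivity for $k\ge4$, and necessity shows unboundedness off $\mathcal C_k$. Since $\mathrm{bal}(\cdot,G)$ is then bounded, I would conclude by pinning its eventual value to a single constant $c(G)$ through the extremal colorings above, and by checking the finitely many small configurations with $k=2$ directly.
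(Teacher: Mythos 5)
Your proposal is correct, and both directions land on the same two target configurations as the paper: the necessity argument (matching coloring forces $\beta(G)\ge k/2$, star coloring forces a vertex of degree $k/2$) is identical to ours, and your ``star mode'' and ``matching mode'' are exactly our Case 1 (an apex with $k/2$ opposite-colored edges into a monochromatic clique, hosting $x$ and $N_G(x)$ versus $G-x$) and Case 2 (an induced monochromatic matching hosting $M_G$, with the remaining $k/2$ edges placed in the other color via Lemma \ref{lem:structure}(b),(d)). Where you diverge is in how these configurations are located: we simply invoke Theorem \ref{thm:constant} with $t=k+1$, which under a constant edge condition hands us an induced monochromatic $K_{1,k+1}$ (apex over an opposite-colored clique) or an induced monochromatic $(k+1)K_2$, whereas you rebuild the dichotomy from scratch (Ramsey clique $W_0$, red-dominance, then a case split on $\Delta(B)$). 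Your inline derivation is sound --- the counting step that extracts a red apex over a blue clique from red-dominance works, and bounded $\Delta(B)$ plus $e(B)\ge C$ does yield a large \emph{induced} blue matching via the strong chromatic index --- but it essentially re-proves Theorem \ref{thm:constant}, which is precisely the tool the paper sets up for this purpose. Two small points to tighten: the bound $\ex(n,\{K_{1,t},tK_2\})=\mathcal O(1)$ only gives a not-necessarily-induced star or matching, so in matching mode you should argue through bounded maximum degree and $\chi'_s$ (as you implicitly do) rather than through that extremal bound; and the final ``pinning to a single constant'' step is as loosely handled in your write-up as the boundedness-versus-eventual-constancy issue is in the paper, so no harm done there.
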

\begin{proof}
Suppose first that ${\rm bal}(n,G) = c(G)$ for sufficiently large $n$. Let $n$ be an integer satisfying  that every $2$-edge-coloring of $K_n$ with $\min \{e(R), e(B)\} >c(G)$ contains a balanced copy of $G$, and such that $\left\lfloor\frac{n}{2} \right\rfloor >c(G)$. In particular, a coloring of $E(K_n) = E(R) \cup E(B)$ where the red edges induce a maximal matching of $K_n$ satisfies $\min\{e(R) , e(B)\} = \left\lfloor\frac{n}{2} \right\rfloor> c(G)$ and, thus, contains a balanced copy of $G$. Since a balanced copy of $G$ must have  exactly $\frac{k}{2}$ red edges, we conclude that $\beta (G)\geq \frac{k}{2}$. Consider now a coloring of $E(K_n) = E(R) \cup E(B)$, where the red edges induce a star $K_{1,n-1}$. This coloring satisfies $\min\{e(R) , e(B)\} =n-1> \left\lfloor\frac{n}{2} \right\rfloor > c(G)$ and, thus, contains a balanced copy of $G$. As all red edges are incident with a single  vertex, and all edges incident to that vertex are red,  we deduce that  $G$ must have a vertex $u$ with   $\deg_G(u)=\frac{k}{2}$. We conclude that $G\in \mathcal{C}_k$, as desired.

Suppose now that $G\in \mathcal{C}_k$. Since isolated vertices do not affect here, we can assume, without loss of generality, that $G$ has minimum degree at least $1$. In order to prove that there is a constant $c(G)$ such that ${\rm bal}(n,G) = c(G)$ for all sufficiently large $n$, we will use Theorem \ref{thm:constant} with $t=k+1$. Let $T=R(2k+2)$ and let $n$ be as large as required for Theorem \ref{thm:constant}. Consider a $2$-edge-coloring of $K_n$ with  $\min \{e(R), e(B)\} >(k+1)(2T^2-6T+5)$. By Theorem \ref{thm:constant}, we know that such a coloring contains either an induced monochromatic $K_{1,k+1}$, or an induced monochromatic $(k+1)K_2$. We will show that, in both cases, we can find a balanced copy of $G$. For the first case, suppose, without loss of generality, that there is an induced red $K_{1,k+1}$. Let $u$ be the central vertex of this star and consider the blue clique, say $U$, containing its $k+1$ neighbors. Observe that, in order to have a balanced copy of $G$, we can choose  appropriately $k$ edges the following way: just take $\frac{k}{2}$ red edges incident  to $u$ and choose the remaining  $\frac{k}{2}$ blue edges from $U$ as needed. In the second case, we can find a balanced copy of $G$ as follows. Let $W$ be the set of vertices of the induced monochromatic copy of $(k+1)K_2$, which we assume to be red. Recall that, by Lemma \ref{lem:structure}(b), there is an edge  $e\in E(G)$ such that $G-e$ can be decomposed into a star $K_{1, \frac{k}{2}}$ and the graph $(\frac{k}{2}-1)K_2$. We will find a balanced copy of $G$ where $\frac{k}{2}-1$ edges of the star plus the edge $e$ will be blue, while one edge from the star together with the remaining $\frac{k}{2}-1$ disjoint edges will be red.  For this purpose, take a set of $\frac{k}{2}$ disjoint red edges, and let $w$ be a vertex incident to one of them. This vertex $w$ will play the role of the vertex in $G$ with degree $\frac{k}{2}$, while the $\frac{k}{2}$ disjoint red edges will constitute a matching of $G$ (possibly maximum). Note that this is possible because of item (d) of Lemma \ref{lem:structure}. Even though the vertices from each of the $\frac{k}{2}-1$ disjoint edges can intersect the set of leaves from the star in $0$, $1$ or $2$ elements, we have enough room in $B[W]$ to choose the $\frac{k}{2}-1$ blue edges incident with $u$ plus the edge $e$.
\end{proof}

\begin{theorem}\label{thm:characterization}
Let $k \ge 2$ be an even integer, and let $G$ be a graph with $e(G)=k$. Then $G$ has constant balancing number if and only if $G\in \mathcal{C}_k$, if $k$ is even, and $G - e \in \mathcal{C}_{k-1}$ for some $e \in E(G)$, if $k$ is odd.
\end{theorem}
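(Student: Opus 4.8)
The plan is to reduce Theorem~\ref{thm:characterization} to the already-established Lemma~\ref{lem:bal_const_odd_k}, handling the even and odd cases separately. For the even case, there is nothing to do beyond Lemma~\ref{lem:bal_const_odd_k}, which states exactly that a graph $G$ with $e(G)=k$ (for even $k$) has constant balancing number if and only if $G \in \mathcal{C}_k$. So the substance of the theorem lies entirely in the odd case, and the goal is to show that when $k = e(G)$ is odd, $G$ has constant balancing number if and only if $G - e \in \mathcal{C}_{k-1}$ for some $e \in E(G)$.

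The key observation driving the odd case is that, by the definition of balanced copies given at the start of Section~\ref{sec:bal}, a balanced copy of a graph $G$ with an odd number of edges $k$ has $\lfloor k/2 \rfloor = \frac{k-1}{2}$ edges in one color and $\lceil k/2 \rceil = \frac{k+1}{2}$ in the other. The natural bridge is to relate balanced copies of $G$ to balanced copies of $G - e$ for a suitable edge $e$. First I would establish the forward direction: suppose $G$ has constant balancing number. I would argue, exactly as in the first half of the proof of Lemma~\ref{lem:bal_const_odd_k}, by testing against two specific colorings of $K_n$ (the maximal matching coloring and the star coloring $K_{1,n-1}$). The maximal matching coloring forces $G$ to contain $\frac{k-1}{2}$ independent edges realizable as one color, giving a matching-number condition, and the star coloring forces a degree condition; translating these into a statement about $G - e$ should yield $G - e \in \mathcal{C}_{k-1}$ for an appropriately chosen $e$. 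For the reverse direction, assuming $G - e \in \mathcal{C}_{k-1}$, I would invoke Theorem~\ref{thm:constant} with $t = k+1$ just as in Lemma~\ref{lem:bal_const_odd_k}, obtaining either an induced monochromatic $K_{1,k+1}$ or an induced monochromatic $(k+1)K_2$, and in each case construct a balanced copy of $G$ by first placing a balanced copy of the $(k-1)$-edge graph $G - e$ using the structural decomposition from Lemma~\ref{lem:structure}(b) applied to $G - e$, and then adding the extra edge $e$ in whichever color makes the total split $\frac{k-1}{2}$ versus $\frac{k+1}{2}$.

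The main obstacle I anticipate is precisely this last assembly step in the reverse direction: because $k$ is odd, the split is unbalanced by one edge, so after embedding $G - e$ as a balanced $(k-1)$-edge subgraph (which has $\frac{k-1}{2}$ edges in each color), the additional edge $e$ must be attachable in the ambient structure (the blue clique $U$ around the star center, or the slack within $B[W]$ in the matching case) while (i) respecting the adjacency constraints that $e$ imposes relative to the already-placed vertices of $G - e$, and (ii) landing in the correct color class. I would need to verify that the abundant room guaranteed by taking $t = k+1$ (one more than strictly needed for $G - e$) provides exactly the extra vertex and the extra monochromatic edge required to accommodate $e$ without disturbing the embedding of $G - e$. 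Careful bookkeeping of which color $e$ receives is essential, since $e$ could be incident to the degree-$\frac{k}{2}$ vertex or lie among the matching edges; I expect a short case analysis on the location of $e$ within $G$ to close this gap.

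A subtle point worth flagging is the choice of $e$ in the forward direction: the statement only requires that \emph{some} edge $e$ yields $G - e \in \mathcal{C}_{k-1}$, so I must exhibit one rather than show it holds for all edges. I would argue that the edge $e$ supplied by the decomposition argument (the edge whose removal leaves a star plus a matching, mirroring Lemma~\ref{lem:structure}(b)) is the correct witness, and that the degree and matching-number conditions extracted from the two test colorings are inherited by $G - e$ with the parameter $k$ replaced by $k-1$. Since isolated vertices do not affect the balancing number, I would continue to assume without loss of generality that $G$ has minimum degree at least $1$, as in the proof of Lemma~\ref{lem:bal_const_odd_k}. With these pieces in place, the theorem follows by combining the even case (Lemma~\ref{lem:bal_const_odd_k} verbatim) with the odd case established above.
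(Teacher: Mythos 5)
Your reduction of the even case to Lemma~\ref{lem:bal_const_odd_k} and your overall two-direction plan for the odd case match the paper's, but both directions of the odd case have gaps as written. In the reverse direction you are working much too hard: once $G-e\in\mathcal{C}_{k-1}$ is assumed, Lemma~\ref{lem:bal_const_odd_k} applied to $G-e$ (which has an even number of edges) already yields a constant $C$ such that every coloring with more than $C$ edges in each color contains a balanced copy of $G-e$, i.e.\ a copy with exactly $\frac{k-1}{2}$ edges of each color. The edge of $K_n$ joining the images of the endpoints of $e$ has some color, and appending it produces a copy of $G$ with $\frac{k-1}{2}$ edges in one color and $\frac{k+1}{2}$ in the other --- which is balanced for odd $k$ no matter which color that edge received. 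The ``careful bookkeeping of which color $e$ receives'' that you flag as the main obstacle is therefore a non-issue; your plan to re-run the Theorem~\ref{thm:constant} embedding from scratch leaves that obstacle unresolved where a one-line observation closes it.

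The forward direction has a more substantive gap: you never actually exhibit the witness edge $e$. Your proposed candidate --- the edge from the star-plus-matching decomposition of Lemma~\ref{lem:structure}(b) --- is unavailable here, since that lemma applies to members of $\mathcal{C}_k$ for \emph{even} $k$, and $G$ has odd edge number and is not yet known to carry any such structure. The two test colorings only give $\beta(G)\ge\frac{k-1}{2}$ and a vertex $u$ with $\frac{k-1}{2}\le\deg_G(u)\le\frac{k+1}{2}$; turning this into $G-e\in\mathcal{C}_{k-1}$ requires a case analysis that your sketch omits. When $\deg_G(u)=\frac{k+1}{2}$ one must choose $e$ incident with $u$ but outside a maximum matching, so that the degree drops to $\frac{k-1}{2}$ while $\beta$ is preserved; when $\deg_G(u)=\frac{k-1}{2}$ one must instead choose $e$ \emph{not} incident with $u$, treating separately the subcases $\beta(G)\ge\frac{k+1}{2}$ (where losing one matching edge is affordable) and $\beta(G)=\frac{k-1}{2}$ (where one must argue that some edge lies outside the maximum matching and is not incident with $u$). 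This selection of $e$ is the actual content of the direction, and the assertion that ``translating these into a statement about $G-e$ should yield'' the claim does not supply it.
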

\begin{proof}
The case that $k$ is even is already covered by Lemma \ref{lem:bal_const_odd_k}. So we let $k$ be odd. Suppose first that $G$ is a graph on $k$ edges having an edge $e \in E(G)$ such that $G - e \in \mathcal{C}_{k-1}$. Then there is a constant $C$ such that, for $n$ large enough, there is a balanced copy of $G - e$ in every coloring of $K_n$ with more than $C$ edges in each color. No matter what color is the edge which corresponds to $e$, we see easily that there is a balanced copy of $G$, implying that $\bal(n, G) \le C$.

To prove the converse, let $G$ be a graph on $k$ edges having constant balancing number, say $\bal(n,G) = C$, for $n$ sufficiently large. Let $n$ be an integer satisfying  that every $2$-edge-coloring of $K_n$ with more than $C$ edges in each color contains a balanced copy of $G$, and such that $\left\lfloor\frac{n}{2} \right\rfloor > C$. In particular, a coloring of $E(K_n) = E(R) \cup E(B)$ where the red edges induce a maximal matching of $K_n$ satisfies $\min\{e(R) , e(B)\} = \left\lfloor\frac{n}{2} \right\rfloor> C$ and, thus, contains a balanced copy of $G$. Since a balanced copy of $G$ has either $\frac{k-1}{2}$ or $\frac{k+1}{2}$ red edges, we conclude that $\beta (G)\geq \lfloor \frac{k-1}{2} \rfloor$. Consider now a coloring of $E(K_n) = E(R) \cup E(B)$, where the red edges induce a star $K_{1,n-1}$. This coloring satisfies $\min\{e(R) , e(B)\} = n-1 > \left\lfloor\frac{n}{2} \right\rfloor > C$ and, thus, contains a balanced copy of $G$. As all red edges are incident with a single  vertex, and all edges incident to that vertex are red,  we deduce that  $G$ must have a vertex $u$ with   $\frac{k-1}{2} \deg_G(u) \le \frac{k+1}{2}$. Now we distinguish two cases.

\noindent
\emph{Case 1. Suppose that $\deg_G(u) = \frac{k+1}{2}$.} Let $M$ be a maximum matching in $G$. By what we deduced above, we have $|M| \ge \frac{k-1}{2}$.  Since $\deg_G(u) = \frac{k+1}{2} \ge 2$, there exists an edge $e$ that is incident to $u$ but not part of $M$. Then $M$ is a matching in $G-e$ and thus $\beta(G-e) \ge |M| \ge \frac{k-1}{2}$. Since, moreover, $e(G-e) = k-1$ and $\deg_{G-e}(u) = \frac{k-1}{2}$, $G - e \in \mathcal{C}_{k-1}$, and we are done.

\noindent
\emph{Case 2. Suppose that $\deg_G(u) = \frac{k-1}{2}$.} Since, in any matching of $G$, there is at most one edge incident with $u$, we deduce, together with the lower bound obtained above, that $\frac{k-1}{2} \le \beta(G) \le \frac{k+3}{2}$. If $\beta(G) \ge \frac{k+1}{2}$, we consider an edge $e$ that is not incident with $u$, and notice that $\beta(G-e) \ge \beta(G)-1 \ge \frac{k-1}{2}$. On the other hand, if $\beta(G) = \frac{k-1}{2}$, then we take a maximum matching $M$, and note that there is at least one edge $e \in E(G) \setminus M$ that is not incident with $u$. Then $M$ is still a matching in $G-e$, implying that $\beta(G-e) \ge \frac{k-1}{2}$. Hence, in both cases we obtain that $G-e$ has $e(G-e) = k-1$, $\deg_{G-e}(u) = \frac{k-1}{2}$, and $\beta(G-e) \ge \frac{k-1}{2}$, yielding that $G-e \in \mathcal{C}_{k-1}$.
\end{proof}
With a much more detailed analysis via an inductive proof, one can get that $\bal(n,G) \le \frac{15}{8} k^2$ for any $G \in \mathcal{C}_k$ \cite{CGHJMM}.

\section{Forbidding weakly induced graphs}\label{sec:effect}
In Theorems \ref{thm:linear-tK_2} and \ref{thm:constant} we can see the effect of forbidding an induced matching of certain size in each of the colors. In this section, we explore the possibility of forbidding other induced monochromatic substructures.

We say that a graph $H$ is \emph{weakly induced} as a subgraph of a graph $G$, if $H$ appears as a subgraph of $G$ but there are no induced edges between the components of $H$ (in other words, the components of $H$ are pairwise at distance at least $2$ in $G$). Note that the notion of weakly induced subgraph makes only sense when $H$ is a non-connected graph, since otherwise it is simply a subgraph.

The following theorem states that,  for any connected bipartite graph $H$ of order at least $3$,  there is a constant $C= C(H,r,s,t)$ such that every $2$-coloring of $K_n$ with at least $C \ex(n,H)$ edges in each color contains either an induced monochromatic $K_{s,t}$, an induced monochromatic $S_{t,t}$, or a monochromatic weakly induced $rH$. The case where $H=K_2$ was considered in Theorem \ref{thm:linear-tK_2}. Observe that Theorem \ref{thm:forbidding_tH} does not generalize the latter because we need here a different hypothesis on the number of edges in each color. However, the proof works similarly.
\begin{theorem}\label{thm:forbidding_tH}
Let $r,s,t$ be positive integers such that $r \ge 2$, and $t \ge \max\{2,  s\}$, and let $H$ be any connected bipartite graph of order at least $3$. Then there is a constant $C= C(H,r,s,t)$ such that, if $n$ is large enough, then every coloring $E(K_n) = E(R) \cup E(B)$ with $\min \{ e(R), e(B)\} \ge C \ex(n, H)$, but without a monochromatic weakly induced $rH$, contains an induced monochromatic member from $\mathcal{F}_{s,t}$.
\end{theorem}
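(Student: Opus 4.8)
The plan is to mirror the structure of the proof of Theorem~\ref{thm:linear-tK_2}, replacing the copies of $K_2$ with copies of $H$ and the role of the matching number $r$ with the number of weakly induced copies of $H$. The key enabling fact is that $\ex(n,H) = o(n^2)$ for any bipartite $H$ (by the K\H{o}vari--S\'os--Tur\'an theorem), so $C\ex(n,H)$ edges in a color force many pairwise ``far apart'' copies of $H$, while still being a subquadratic amount that we can absorb into large monochromatic cliques. First I would fix parameters $\ell, q$ with $\ell$ large enough (depending on $r,s,t$) so that $\ell^2 \ge \max\{2z(\ell,t), 4r\, z(\ell,\ell;s,t)\}$ and $\ell \ge t$, set $q = Q(\ell, 2n(H))$ where $n(H) = |V(H)|$ (so that Lemma~\ref{lem:Ramsey-matrix} applies to the $2n(H)$ rows indexing the vertices of $q$ disjoint copies of $H$), and choose $C$ large enough that $C\ex(n,H) \ge \ex(n, qH)$; since $\ex(n,qH) = \ex(n,H) + O(1)$ for a fixed number $q$ of disjoint copies (as $qH$ and $H$ have the same Turán threshold up to an additive constant, a standard fact for bounded graphs), such a constant exists.

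Next I would extract structure. Having $\min\{e(R),e(B)\} \ge C\ex(n,H) \ge \ex(n,qH)$ in each color gives a red copy and a blue copy of $qH$. Removing the $O(1)$ vertices they use still leaves $\ge R(\ell)$ vertices, inside which Ramsey gives a monochromatic $\ell$-clique $S'$, say red, disjoint from both copies of $qH$. Now consider the blue copy of $qH$; labeling the $n(H)$ vertices of each of the $q$ copies by rows $i \in [2n(H)]$ (padding as needed) and the copies by columns $j \in [q]$, Lemma~\ref{lem:Ramsey-matrix} with $q \ge Q(\ell, 2n(H))$ yields a set $L$ of $\ell$ columns such that each row, restricted to $L$, induces a monochromatic $K_\ell$. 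If any two rows corresponding to a fixed bipartition side of $H$ both induce a \emph{blue} $K_\ell$ on $L$, then that blue $K_\ell$ together with the red clique $S'$ gives, via the most-represented color and $\ell^2/2 > z(\ell,t)$, a monochromatic $K_{t,t}$ and hence an induced monochromatic $S_{t,t}$; so I may assume all these cliques are red. The crux is then a graph $F$ on vertex set $L$ whose edges record which column-pairs $\{j_1,j_2\}$ carry a \emph{blue} $H$-edge between the two copies: if $\alpha(F) \ge r$, then $r$ columns pairwise joined only by red edges between distinct copies, together with the already-red cliques along each row, assemble a monochromatic weakly induced $rH$ (the within-copy edges come from the red row-cliques, the across-copy absence of blue edges guarantees the weak-induced condition). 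Turán's bound then forces $e(F) > \ell^2/(4r) \ge z(\ell,\ell;s,t)$, producing a blue $K_{s,t}$ between two rows, i.e.\ an induced blue $K_{s,t}$.

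The main obstacle I anticipate is the bookkeeping in reconstructing a \emph{weakly induced} $rH$ rather than a mere matching $rK_2$: unlike a single edge, a copy of $H$ spans several rows, so I must arrange that the $r$ chosen columns (from an independent set in $F$) simultaneously realize, within each column, a full blue copy of $H$ using the intra-copy structure, while no blue edge runs between the copies selected. This requires being careful that the Ramsey-matrix lemma controls enough rows (all $2n(H)$ vertex-slots of a copy of $H$, across both sides of its bipartition), and that the definition of $F$ registers a ``bad'' (blue) interaction between two copies as soon as \emph{any} cross edge in the relevant positions is blue, so that independence in $F$ genuinely certifies the weak-induced property. The inequality $\ex(n,qH) \le C\ex(n,H)$ for fixed $q$ is routine but should be invoked explicitly, and the thresholds $z(\ell,t)$, $z(\ell,\ell;s,t)$ enter exactly as in Theorem~\ref{thm:linear-tK_2}; modulo these adaptations, the argument follows the same skeleton, which is why the authors remark that ``the proof works similarly.''
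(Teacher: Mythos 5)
Your proposal follows the same skeleton as the paper's proof (a red and a blue copy of $qH$, a Ramsey clique $S'$ disjoint from them, Lemma~\ref{lem:Ramsey-matrix} applied to the $m=n(H)$ rows of the blue copy, the auxiliary graph $F$ on the columns, Tur\'an's independence bound, and a Zarankiewicz count), so the strategy is right. But three steps need repair. First, $\ex(n,qH)=\ex(n,H)+\mathcal{O}(1)$ is false; Gorgol's theorem gives $\ex(n,qH)=\ex(n,H)+\mathcal{O}(n)$, and to conclude $C\ex(n,H)\ge \ex(n,qH)$ you must also use that $\ex(n,H)=\Omega(n)$, which is exactly where the hypothesis that $H$ is connected of order at least $3$ enters. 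Second, your threshold $\ell^2\ge 4r\,z(\ell,\ell;s,t)$ is too weak for the final step: $e(F)>\ell^2/(4r)$ only bounds the \emph{total} number of blue edges running between the copies $H_{j_1},H_{j_2}$ with $j_1,j_2\in L$, and these are distributed among the $\binom{m}{2}$ pairs of rows $(S_{i_1},S_{i_2})$; to pigeonhole one pair above the Zarankiewicz bound you need $\ell^2\ge 4r\binom{m}{2}z(\ell,\ell;s,t)$, as the paper takes. Third, the reduction ``if any \emph{two} rows both induce a blue $K_\ell$, then done'' does not let you conclude that all row-cliques are red, since one blue row-clique could remain; but a single blue $S_i$ already suffices (paired with the red $S'$, the majority color between them exceeds $z(\ell;t)$ and yields a monochromatic $K_{t,t}$, hence an induced $S_{t,t}$), so the dichotomy should be stated that way.

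A smaller point: your account of how $\alpha(F)\ge r$ produces the forbidden configuration is muddled. The $r$ copies of $H$ are simply the blue copies $H_j$ for $j$ ranging over an independent set of $F$; since independence in $F$ means no blue edge joins any two of them, they are pairwise at distance at least $2$ in the blue graph and form a blue weakly induced $rH$ directly. The red row-cliques play no role in assembling these copies — they matter only for guaranteeing that every blue edge counted via $e(F)$ runs between two \emph{distinct} rows, which is what makes the final $K_{s,t}$ count between $S_{i_1^*}$ and $S_{i_2^*}$ legitimate.
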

Before proceeding to prove Theorem \ref{thm:forbidding_tH}, we need a result by Gorgol \cite{Gor11}, which states that, for any connected graph $H$ and any integer $q \ge 1$,
\begin{equation}\label{eq:Gorgol}
\ex(n,qH) = \ex(n,H) + \mathcal{O}(n).
\end{equation}
note that, if $H$ is a connected graph with $\ex(n,H) = \Omega(n)$, then there is a constant $C$ such that $C \ex(n,H) \ge \ex(n,qH)$. But the condition $\ex(n,H) = \Omega(n)$ is equivalent to saying that $H$ has order at least $3$ because the only connected graph $H$ with $\ex(n,H) = o(n)$ is precisely $K_2$ (it is the only connected non-trivial subgraph contained in both $K_{1,n-1}$ and $\lfloor \frac{n}{2}\rfloor K_2$, which have both a linear number of edges, and clearly $\ex(n,K_2) = 0$).

As in Section \ref{sec:evolution}, we will make use of the Zarankievicz number ${\rm z}(n;t)$ defined as the maximum number of edges in a $K_{t,t}$-free bipartite graph with $n$ vertices in each part.

\begin{proof}[Proof of Theorem \ref{thm:forbidding_tH}]
Let $m = n(H)$,  and let $\ell, q$ be integers with $\ell^2 \ge \max\{ 2 z(\ell,t), 4r \binom{m}{2} z(\ell,\ell;s,t) \}$, $\ell \ge t$, and $q \ge Q(\ell,m)$.  Let $n \ge R(\ell) + 2qm$ and let $C$ be large enough such that $C \ex(n,H) \ge \ex(n,qH)$, which exists by what we discussed before starting the proof.  Suppose we have a complete graph $K_n$ provided with a red-blue coloring $E(K_n) = E(R) \cup E(B)$ such that $\min \{ e(R), e(B)\} \ge C \ex(n,H)$ and such that it has no monochromatic weakly induced $rH$ (this can happen for $r \ge 2$). Observe that the condition on the amount of edges in each color class is possible because $H$ is bipartite, and thus $\ex(n,H) = o(n^2)$. Then there is both, a red and a blue copy of $qH$.  Consider the set $S$ of vertices that do not belong to any of these copies.  Since $|S| \ge R(\ell)$, there is a monochromatic $\ell$-clique $S'$ disjoint from the red and the blue copies of $qH$. Without loss of generality, we can assume that $S'$ is a red clique. Now consider the blue copy of $qH$ and let $H_1, H_2 \ldots, H_q$ the different copies of $H$. Label with $v_{i,j}$, $i \in [m]$ the vertices from $H_j$, for $j \in [q]$.  Since $q \ge Q(\ell,m)$, there  is a set $L \subseteq [q]$ with $|L| = \ell$ such that, for each $i \in [m]$, the set $S_i = \{v_{i,j} \;|\; j \in L\}$ induces a monochromatic $K_{\ell}$.  If there is some $i \in [m]$ such that $S_i$ induces a blue $K_{\ell}$, then we can consider the most represented color in $E(S',S_i)$, which has at least $\frac{{\ell}^2}{2} > z(\ell;t)$ edges.  Hence, it can be easily seen that  there is a monochromatic $K_{t,t}$ with one partition set inside $S$ and the other inside $S_i$. In either case, if this $K_{t,t}$ is red or blue, we obtain an induced monochromatic $S_{t,t}$. Thus, we may assume that all cliques $S_i$, $i \in [m]$, are red.
Define a graph $F$ with vertex set $V(F) = \{w_j \;|\; j \in L \}$ and edge set $E(F) = \{ w_{j_1}w_{j_2} \; |\; e_B(H_{j_1}, H_{j_2}) \neq 0\}$.  If $\alpha(F) \ge r$,  this  would imply,  by the construction of $F$, that there is a blue weakly induced $rH$, which is not possible. Hence,  we can assume that $\alpha(F) < r$, and by Tur\'an's bound on the independence number \cite{Tur} we have
\[r > \alpha(F) \ge \frac{\ell}{d(F)+1} = \frac{{\ell}^2}{2e(F)+\ell},\]
implying that $e(F) > \frac{{\ell}^2}{2t} - \frac{\ell}{2} \ge \frac{{\ell}^2}{4r}$.  By construction of the graph $F$, every edge $e \in E(F)$ corresponds to at least one blue edge $e' \in E(H_{i_1}, H_{i_2})$ for some pair $i_1, i_2 \in L$.  Moreover, since among the sets $S_i$, $i \in [m]$, there are only red edges, we can derive
\[\frac{{\ell}^2}{4r} < e(F) \le \sum_{j_1, j_2 \in L} e_B(H_{j_1}, H_{j_2}) = \sum_{i_1, i_2 \in [m], i_1 \neq i_2} e_B(S_{i_1}, S_{i_2}).\]
As there are $\binom{m}{2}$ different pairs $(i_1, i_2) \in [m]^2$, there is at least one pair $(i_1^*,i_2^*) \in [m]^2$ such that
\[e_B(S_{i_1^*},S_{i_2^*}) > \frac{{\ell}^2}{4r \binom{m}{2}} \ge z(\ell,\ell;s,t),\]
implying that there is a blue $K_{s,t}$ with one partition set in $S_{i_1^*}$ and the other in $S_{i_2^*}$. Hence, there is an induced blue $K_{s,t}$.
\end{proof}
Observe that, in Theorem \ref{thm:forbidding_tH}, the condition that $H$ is bipartite is best possible in the sense that any non-bipartite graph $H$ has $\ex(n,H) \ge \frac{n^2}{4}$ (see \cite{FuSi-survey}) and, because of this, we cannot have a coloring  $E(K_n) = E(R) \cup E(B)$ such that $\min \{ e(R), e(B)\} \ge C \ex(n,H)$, as needed in the proof. In contrast, if $T$ is a tree of order at least $3$, we know that $\ex(n,T)$ is linear in $n$ (see \cite{FuSi-survey}). As a consequence, we deduce with (\ref{eq:Gorgol}) that  
\begin{equation}\label{eq:GorgolT}
\ex(n, qT) = \Theta(n)
\end{equation}
 for any tree $T$ of order at least $3$ and any integer $q \ge 1$.  Hence,  Theorem \ref{thm:forbidding_tH} implies that a sufficiently large linear constraint on the color classes is enough to guarantee the existence of either a monochromatic weakly induced $rT$, an induced monochromatic $K_{s,t}$,  or an induced monochromatic $S_{t,t}$. 
\begin{corollary}\label{cor:linear-weak-ind-tT}
Let $r,s,t$ be positive integers  such that $r \ge 2$, $t \ge \max\{2,  s\}$. Then, for any tree $T$  of order at least $3$, there is a constant $C= C(T,r,s,t)$  such that, if $n$ is large enough,  then  every coloring $E(K_n) = E(R) \cup E(B)$ with $\min \{ e(R), e(B)\} \ge C n$ without a monochromatic weakly induced $rT$, contains an induced monochromatic member from $\mathcal{F}_{s,t}$.
\end{corollary}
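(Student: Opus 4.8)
The plan is to obtain this as a direct specialization of Theorem~\ref{thm:forbidding_tH} to the case $H = T$, using only two elementary facts about trees: that a tree is a connected bipartite graph, and that its extremal number is linear in $n$. No genuinely new argument is needed beyond the discussion preceding the statement; the entire task is to check that the hypotheses of Theorem~\ref{thm:forbidding_tH} are satisfied and then to convert its edge condition, phrased in terms of $\ex(n,T)$, into a purely linear one.

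First I would verify the structural hypotheses. Every tree $T$ of order at least $3$ is connected by definition and, being acyclic, contains no odd cycle and is therefore bipartite; moreover it has order at least $3$ by assumption. Hence $H = T$ meets all the requirements of Theorem~\ref{thm:forbidding_tH}, which supplies a constant $C_0 = C_0(T,r,s,t)$ such that, for $n$ large enough, every coloring $E(K_n) = E(R) \cup E(B)$ with $\min\{e(R), e(B)\} \ge C_0\,\ex(n,T)$ and no monochromatic weakly induced $rT$ contains an induced monochromatic member of $\mathcal{F}_{s,t}$. It then remains to replace the hypothesis $\min\{e(R), e(B)\} \ge C_0\,\ex(n,T)$ by a linear one. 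Since $T$ is a tree of order at least $3$, its extremal number is linear, i.e. $\ex(n,T) = \Theta(n)$ (see~\cite{FuSi-survey}), so there is a constant $c_T$ with $\ex(n,T) \le c_T\, n$ for all sufficiently large $n$. Setting $C = C_0\, c_T$, any coloring with $\min\{e(R), e(B)\} \ge C n$ automatically satisfies $\min\{e(R), e(B)\} \ge C_0\,\ex(n,T)$, and Theorem~\ref{thm:forbidding_tH} then yields the desired conclusion.

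The only mildly nontrivial ingredient is the linear bound $\ex(n,T) = \Theta(n)$ for trees, and even the weaker upper bound $\ex(n,T) = O(n)$ would suffice for the reduction; this is classical and was already recorded above, in the form~\eqref{eq:GorgolT}, in the remarks motivating the corollary. Consequently there is no real obstacle here: the statement follows cleanly by substituting $H = T$ into Theorem~\ref{thm:forbidding_tH} and absorbing the bounded factor $\ex(n,T)/n = O(1)$ into the constant.
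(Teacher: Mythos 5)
Your proposal is correct and matches the paper's own derivation: the corollary is obtained exactly by specializing Theorem~\ref{thm:forbidding_tH} to $H=T$, checking that a tree of order at least $3$ is a connected bipartite graph, and absorbing the bounded ratio $\ex(n,T)/n$ (which exists since $\ex(n,T)=\Theta(n)$, cf.~\eqref{eq:GorgolT}) into the constant. No gaps.
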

In this case we obtain again sharpness by means of the example given in Figure \ref{fig:lineal_sharp}.\\

\section{Multicolor setting}\label{sec:multic}

One possible and natural line that could be pursued within this topic is to consider more colors. In this sense, for an integer $k \ge 2$, given a family $\mathcal{F}$ of $k$-colored graphs, $\ex_k(K_n,\mathcal{F})$ would be the maximum integer $m$ (if it exists) such that there is a $k$-coloring of the edges $K_n$ with $m$ edges in the smallest color class but without a color-matched copy of any of the members of $\mathcal{F}$. In \cite{BHMM}, a multicolor version of (\ref{eq:CHM}) is given. More precisely, it is shown that there is a family $\mathcal{F}^k_t$ of $k$-colored complete graphs that is unavoidable if the coloring has at least $C n^{2-\frac{1}{tk^k}}$ edges in each color, where $C= C(t,k) >0$ is a constant. It is conjectured that $n^{2-\frac{1}{t}}$ is the correct order of magnitude \cite{BHMM}. The family $\mathcal{F}^k_t$ consists of $k$-colored complete graphs whose vertices can be partitioned into monochromatic complete graphs of order $t$ such that the complete bipartite graph between any two of them is monochromatic, too. Moreover, all $k$ colors are present in every member of $\mathcal{F}^k_t$ and no member is properly contained in another. It turns out here too that, assuming there is at least certain constant amount of edges in each color, monochromatic stars or monochromatic matchings are unavoidable. Moreover, if there are colors where the latter does not happen, color patterns similar to $\mathcal{F}^k_t$ emerge as well. \\

\begin{theorem}\label{thm:multicol}
Let $k ,r, t  \ge 2$ be integers. For $n$ large enough, there is a constant $c(k,r,t)$ such that, if  $f: E(K_n) \to [k]$ is a $k$-coloring of the edges of $K_n$ and $\min \{ |f^{-1}(i)| \;|\; i \in [k]\} \ge c(k,r,t)$, $A_f$ is the set of all $i \in [k]$ such that the graph induced by the edge set $f^{-1}(i)$ contains an induced monochromatic member from $\{ K_{1,t} , tK_2\}$, and $B_f = [k] \setminus  A_f$, then the following hold:  
 \begin{enumerate}
     \item[(a)] $A_f \neq \emptyset$;
     \item[(b)] For any $i\in B_f$, the graph induced by  $f^{-1}(i)$ contains a clique $C_i$ of size $t$, such that, for any $j \in B_f$, $i \neq j$, the set $C_i \cup C_j$ induces a monochromatic $K_{t,t}$ of some color from $A_f$.
 \end{enumerate}
\end{theorem}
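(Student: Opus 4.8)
The plan is to reduce this multicolor statement to the two-color case that has already been settled, and in particular to Theorem~\ref{thm:constant}. Given a $k$-coloring $f$ of $E(K_n)$ and a color $i \in [k]$, I form the auxiliary $2$-edge-coloring of $K_n$ in which color $i$ plays the role of \emph{red} and the union of all remaining colors plays the role of \emph{blue}; that is, I set $R = f^{-1}(i)$ and $B = \bigcup_{j \neq i} f^{-1}(j)$. If $c(k,r,t)$ is chosen large enough, then each individual color class has at least this constant number of edges, and hence so does $R$; since $\binom{n}{2}$ grows and at least one color has $\ge c(k,r,t)$ edges, the complementary class $B$ also has at least a constant number of edges for $n$ large. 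The key observation is that \emph{monochromatic induced} substructures of the single color $i$ in the original $k$-coloring are exactly \emph{induced red} substructures in this auxiliary $2$-coloring, because a set $U$ induces in color $i$ precisely when all its internal edges receive color $i$.

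With this translation in place, part (a) is almost immediate. I apply Theorem~\ref{thm:constant} with the threshold constant $C(r,t)$ to the auxiliary $2$-coloring obtained from any fixed color, choosing $c(k,r,t) \ge C(r,t)$ so that the hypotheses are met. If the auxiliary coloring contains \emph{no} induced red $rK_2$, Theorem~\ref{thm:constant} forces an induced monochromatic $K_{1,t}$; since the star could a priori be blue (i.e. in the union color), I need to be slightly careful. The cleaner route is to observe that if \emph{every} color $i$ fails to contain an induced member of $\{K_{1,t}, tK_2\}$ (i.e. $A_f = \emptyset$), then in particular each color is free of an induced $rK_2$, and I can invoke Theorem~\ref{thm:constant} on one auxiliary coloring to produce an induced monochromatic $K_{1,t}$ in \emph{some} color, contradicting $A_f = \emptyset$. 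This yields $A_f \neq \emptyset$. I expect the main subtlety here to be bookkeeping about which color the forced star or matching actually lands in, since Theorem~\ref{thm:constant} delivers a monochromatic pattern in one of the two auxiliary colors, and I must trace it back to a genuine color of $[k]$; this is where the definition of $A_f$ as ``contains an induced member of $\{K_{1,t}, tK_2\}$'' must be matched precisely.

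For part (b), fix $i \in B_f$. By definition of $B_f$, the graph $f^{-1}(i)$ contains no induced $K_{1,t}$ and no induced $tK_2$, so in particular it has bounded induced matching number and, via Ramsey applied to the color-$i$ graph together with the strong-chromatic-index argument used in the proof of Theorem~\ref{thm:constant}, it must contain a (monochromatic, color-$i$) clique $C_i$ of size $t$: indeed, forbidding an induced $tK_2$ bounds the strong chromatic index, so a large enough color class forces a large induced matching unless the edges concentrate, and the absence of an induced star $K_{1,t}$ forces high local density, which together with Ramsey yields a clique of order $t$. This is essentially the internal mechanism of Theorem~\ref{thm:constant}, and I would either extract the clique directly from a careful reading of that proof or re-run the argument with $c(k,r,t)$ taken correspondingly large.

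The final and most delicate step is to analyze the bipartite graph between two such cliques $C_i$ and $C_j$ with $i, j \in B_f$, $i \neq j$. All $t^2$ edges between $C_i$ and $C_j$ receive colors from $[k]$; I must show they are all the \emph{same} color and that this color lies in $A_f$. The plan is to apply a Ramsey-type or pigeonhole argument on the $K_{t,t}$ between $C_i$ and $C_j$: by choosing $t$ slightly larger at the outset (replacing the target $t$ by $BR(t)$ or an appropriate bipartite-Ramsey threshold and passing to subcliques), I can force a monochromatic $K_{t,t}$ of some single color $c^*$ between subsets of $C_i$ and $C_j$. The hard part will be proving $c^* \in A_f$: I argue that if $c^* \in B_f$, then the color-$c^*$ graph contains this induced $K_{t,t}$, but a $K_{t,t}$ with $t \ge 2$ contains an induced $2K_2$ and a $K_{1,t}$, so for $r, t$ in range this would place $c^* \in A_f$, a contradiction; hence $c^* \in A_f$. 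Threading the quantifiers so that a \emph{single} large constant $c(k,r,t)$ simultaneously underwrites part~(a), the clique extraction, and the bipartite-Ramsey refinement is the principal obstacle, and I would handle it by taking $c(k,r,t)$ to be the maximum of the finitely many constants demanded by each invocation of Theorem~\ref{thm:constant} and the Ramsey numbers involved.
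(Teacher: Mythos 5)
Your overall architecture (reduce to Theorem~\ref{thm:constant}, extract a color-$i$ clique for each $i\in B_f$, then apply bipartite Ramsey between cliques and argue the resulting monochromatic color lies in $A_f$) is the same as the paper's, but your proof of part~(a) has a genuine gap. In the auxiliary $2$-coloring with $R=f^{-1}(i)$ and $B=\bigcup_{j\neq i}f^{-1}(j)$, Theorem~\ref{thm:constant} requires that the $2$-coloring have \emph{no monochromatic induced $rK_2$}, and its conclusion may land in the blue class. Neither direction of your translation works: the individual colors being free of induced $tK_2$'s does not make the \emph{union} $B$ free of an induced $rK_2$ (a blue induced matching can use edges of several different colors), and a blue induced $K_{1,t}$ or $rK_2$ produced by the theorem need not be monochromatic in any single color of $[k]$, so it does not contradict $A_f=\emptyset$. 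The paper's resolution of exactly this point is the step you are missing: when the pattern returned by Theorem~\ref{thm:constant} is blue, the red graph $G_i$ restricted to that vertex set contains a large clique $K_q$ (the leaves of a blue induced star, or one endpoint of each edge of a blue induced matching, form a red clique). Hence every $i\in[k]$ yields an induced $K_{1,q}$ or $qK_2$ in color $i$, \emph{or} a $K_q$ in color $i$; if $A_f=\emptyset$ then all $k\ge 2$ colors fall into the clique case, and the bipartite-Ramsey-between-cliques argument you describe for part~(b) produces a single-color $K_{t',t'}$ whose color must lie in $A_f$, a contradiction. So part~(a) must be routed through the part~(b) machinery; your direct reduction does not close.

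Two smaller points. First, your justification that the color $c^*$ of the monochromatic $K_{t,t}$ between $C_i$ and $C_j$ lies in $A_f$ leans on ``$K_{t,t}$ contains an induced $2K_2$,'' which is false ($K_{t,t}$ has no induced $2K_2$); the correct and sufficient observation is that taking one vertex of the $K_{t,t}$ on the side whose clique color differs from $c^*$ together with $t$ vertices on the other side gives an \emph{induced} $K_{1,t}$ in $G_{c^*}$, precisely because the cliques $C_i,C_j$ carry colors $i,j\in B_f$ of which at most one equals $c^*$. Second, for part~(b) you need the conclusion simultaneously for \emph{all} pairs $i,j\in B_f$, which forces an iterated refinement: the cliques must start at size $t_b$ where $t_a = BR(k;t_{a-1})$, $t_1=BR(k;t)$ and $b=\binom{|B_f|}{2}$ (note the $k$-color bipartite Ramsey number, since the edges between two cliques may use all $k$ colors), shrinking by one Ramsey level per pair; you should also account for the fact that cliques of different colors may share a vertex, so they must be taken slightly larger and passed to disjoint subcliques. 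You gesture at ``taking the maximum of finitely many constants,'' but the nested, order-dependent shrinking is the actual content here and should be written out.
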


\begin{proof}
Let $n$ be large enough such that the following argumentation remains valid. Let $b = \binom{|B_f|}{2}$, and $t_1 =  BR(k;t)$, where $BR(k;t)$ is the bipartite Ramsey number for $k$ colors and $K_{t,t}$. We define recursively $t_a = BR(k;t_{a-1})$, for $2 \le a \le b$, and we set $q = t_b + b$. Let $c(t) = C(q,q)$ be the constant of Theorem \ref{thm:constant}, and suppose that $f: E(K_n) \to [k]$ is a $k$-coloring of the edges of $K_n$ such that $\min \{ |f^{-1}(i)| \;|\; i \in [k]\} \ge c(t)$.  Let $G_i$ be  the graph induced by the edge set $f^{-1}(i)$.

We will show first that $G_i$ contains either an induced $K_{1,q}$, an induced $qK_2$ or a $K_q$, for every $i \in [k]$. Consider a coloring $E(K_n) = E(R) \cup E(B)$ such that $R \cong G_i$. By Theorem \ref{thm:constant}, there is a monochromatic induced $qK_2$ or a monochromatic induced $K_{1,q}$. If any of these is blue, we see easily that we have a red $K_q$. Otherwise, there is a red induced member from $\{ K_{1,q} , qK_2\}$.

Clearly, $A_f \neq \emptyset$ if $|B_f| \le 1$. Now suppose $|B_f| \ge 2$. By what we showed above, this means that $G_i$ contains a clique $C'_i$ of size $q = t_b + b$, for $1 \le i \le b$. Observe that any two such cliques can have at most one vertex in common as they are of different colors. Since $q = t_b + b$, and $b = \binom{|B_f|}{2}$, we can select $t_b$ vertices from each clique $C'_i$ to obtain pairwise disjoint cliques $C_i$, $1 \le i \le b$, of size $t_b$ each. Take now two indices $i, j \in B_f$, $i \neq j$.
Since $t_b =  BR(k;t_{b-1})$, there has to be a monochromatic $K_{t_{b-1},t_{b-1}}$ between $C_i$ and $C_j$. Say this $K_{t_{b-1},t_{b-1}}$ has color $\ell$, which can be equal to $i$ or $j$, but clearly not to both. In any case, this complete bipartite graph contains an induced $K_{1,t}$ as a subgraph. In particular, we have shown that  $\ell\in A_f$, thus $A_f \neq \emptyset$, completing the proof for (a). To finish the proof of (b), one can proceed recursively as above for all pairs of cliques. In each step, we select a subset from the cliques fulfilling that the edges between them are monochromatic. This can be done by selecting, at step $s$, $t_{b-s} = BR(k;t_{b-s-1})$ vertices from each clique. Note that edges between the pairs already covered are all of the same color, so selecting subsets from the cliques does not represent a problem between these pairs. For the new pair however, we need to carefully select $t_{b-s}$ vertices such that all edges in between have the same color. This can be done because $t_{b-s+1} = BR(k;t_{b-s})$. In this way, the cliques become smaller in each step, starting from size $t_b$, $t_{b-1}$, and so on until size $t_1 = BR(k;t) $, and finally, in the last step, all the cliques are of size $t$. Since the edges among the cliques contain induced $K_{1,t}$'s, it follows that all these edges have colors from $A_f$.
\end{proof}

Observe that we cannot do much better in Theorem \ref{thm:multicol}. Consider $k\ge 3$, and divide the vertex set of $K_n$ into $k-1$ disjoint sets as equal in size as possible, making each of them monochromatic with one color from $\{1, \ldots, k-1\}$. The remaining edges are colored with color $k$. All colors $1, \ldots, k-1$ have only monochromatic complete graphs and no induced star or induced matching.  Only color $k$ has not only induced stars but an induced complete $(k-1)$-partite graph with equal parts.\\

\section*{Acknowledgements}

The second and third authors were partially supported by PAPIIT IG100822 and CONACyT project 282280.

	\end{document}